\newtheorem{theorem}{Theorem}[section]
\newtheorem{corollary}[theorem]{Corollary}
\newtheorem{lemma}[theorem]{Lemma}
\newtheorem{proposition}[theorem]{Proposition}
\theoremstyle{definition}
\newtheorem{question}[theorem]{Question}
\newtheorem{definition}[theorem]{Definition}
\newtheorem{example}[theorem]{Example}
\newtheorem{examples}[theorem]{Examples}
\newtheorem{remark}[theorem]{Remark}
\numberwithin{equation}{subsection}
\newtheorem*{ack}{Acknowledgement}
\newcommand{\Ker}{\operatorname{Ker}}
\newcommand{\im}{\operatorname{Im}}
\newcommand{\Aut}{\operatorname{Aut}}
\newcommand{\End}{\operatorname{End}}
\newcommand{\Conj}{\operatorname{Conj}}
\newcommand{\Core}{\operatorname{Core}}
\newcommand{\Inn}{\operatorname{Inn}}
\newcommand{\Hom}{\operatorname{Hom}}
\newcommand{\id}{\mathrm{id}}
\newcommand{\overbar}[1]{\mkern 1.5mu\overline{\mkern-1.5mu#1\mkern-1.5mu}\mkern 1.5mu}
\begin{document}
\title{Endomorphism monoid and automorphism group of residually finite and profinite quandles}

\author{Manpreet Singh}

\address{Deaprtment of mathematics and statistics, University of south florida, tampa, FL, 33620}
\email{manpreet.math23@gmail.com}

\subjclass[2020]{57K12, 20E26, 20E18}
\keywords{Quandle, residually finite quandle, profinite quandle, endomrophism monoid, automorphism group}

\begin{abstract}
We explore residually finite and profinite quandles. We prove that the endomorphism monoid and the automorphism group of finitely generated residually finite quandles are residually finite. In fact, we establish the similar result for a broad class of residually finite quandles. We provide a topological characterization of profinite quandles. We establish necessary and sufficient conditions for profinite quandles ensuring that their endomorphism monoids and automorphism groups are profinite.
\end{abstract}

\maketitle
\markboth{Manpreet Singh}{Endomorphism monoid and automorphism group of residually finite and profinite quandles}

\section*{Introduction}

The collection of all symmetries of a space forms a group, however, a subset of symmetries may not constitute a group. For example, the collection of reflection symmetries of a regular polygon does not form a group; instead, it forms a non-associative algebraic structure know as quandles. Takasaki  \cite{Takasaki} first studied quandles to investigate the reflection symmetries in finite geometry. For more on symmetric aspect of quandles, we recommend referring to \cite{MR4612506}.

In the 1980s, Joyce \cite{MR0638121} and Matveev \cite{MR0672410} independently rediscovered quandles, recognizing their fundamental role in the study of knots. They proved that every knot is completely determined by its knot quandle up to the orientation of the space and the knot itself. The axioms of quandles are algebraic interpretation of the Reidemeister moves on knot diagrams. Since then, quandles are studied extensively to construct new knot invariants.

In \cite{MR3981139,MR4075375}, the study examined the residual finiteness property of quandles and proved that all link quandles are residually finite. Additionally, residual finiteness of the automorphism group was established for certain classes of residually finite quandles.

Recently, there has been extensive study of the profinite completion of the fundamental group of $3$-manifolds to investigate the topological properties and invariants of $3$-manifolds (as seen in \cite{MR3848406,MR4205634}). In \cite{MR3848406}, it was proven that the Alexander polynomial of a knot is determined by the profinite completion of its knot group.

In this article, we explore residually finite quandles, profinite quandles, their topological aspects, and the associated endomorphism monoid and automorphism group. The article is structured as follows:

In Section \ref{sec:preliminaries}, we recall some key definitions in quandle theory. In Section \ref{sec:endo_of_residually_finite_quandle}, we prove that if a quandle $Q$ is residually finite and has only a finite number of congruences for each $ n \geq 1$, then both the endomorphism monoid $\End(Q)$ and the automorphism group $\Aut(Q)$ are also residually finite, and $Q$ is Hopfian. As a result, for every finitely generated residually finite quandle $Q$, both $\End(Q)$ and $\Aut(Q)$ exhibit the property of being residually finite.

In Section \ref{sec:profinite_quandles}, we present some general results about profinite quandles and provide examples. We give a necessary and sufficient condition for a quandle to be residually finite in terms of profinite quandles. Additionally, we give an example of a countably infinite profinite quandle, which is in contrast to the situation in profinite groups.

In Section \ref{sec:topological_characterization_of_profinite_quandles}, we study the profinite quandles from a topological aspect. We prove that for a given projective system of compact Hausdorff quandles $(Q_i, \varphi_{ij}, I)$, the projective limit $\underset{i \in I}{\varprojlim}~Q_i$ exists (Theorem \ref{thm:existence_of_projective_limits}). Notably, this applies to a projective system of finite quandles with the discrete topology as well. Furthermore, in Theorem \ref{thm:topological_charactersization_of_profinite_quandles}, we present a topological characterization of profinite quandles. While it is well-known that a compact, Hausdorff, and totally disconnected group (or semigroup) is profinite, the situation with quandles remains unclear (see Question \ref{ques:converse?}).

In Section \ref{sec:endo_of_profinite_monoid}, we examine the profinite property of a topological quandles. In Theorem \ref{thm:when_does_end_and_aut_are_profinite}, we prove that given a compact Hausdorff and totally disconnected quandle $Q$, if the endomorphism monoid $\End(Q)$ (respectively, the automorphism group $\Aut(Q)$) is compact within the compact-open topology, then it is a profinite monoid (respectively, profinite group). Theorem \ref{thm:compactness_of_end_and_aut_in_profinite_quandle} provides a necessary and sufficient condition for a profinite quandle $Q$ so that $\End(Q)$ (respectively, $\Aut(Q)$) is profinite. Furthermore, in Theorem \ref{thm:end_aut_inn_as_projective_limits}, we show that for a profinite quandle $Q$, $\End(Q)$ and $\Aut(Q)$ can be viewed as projective limits of finite monoids and finite groups, which subsequently implies that the inner automorphism group $\Inn(Q)$ is profinite. The proofs in this section follow a similar approach to those presented in the paper \cite{MR2849853}.

\section{Preliminaries}\label{sec:preliminaries}

A {\it quandle} is a non-empty set $Q$ together with a binary operation $*$ satisfying the following axioms:
\begin{enumerate}[label={\textbf{Q}\arabic*}]
\item  $x*x=x$\, for all $x\in Q$.
\item  For each $x, y \in Q$, there exists a unique $z \in Q$ such that $x=z*y$.
\item  $(x*y)*z=(x*z)*(y*z)$\, for all $x,y,z\in Q$.
\end{enumerate}

The axiom {\textbf Q2} is equivalent to saying that the right multiplication by each element of $Q$ is a bijection. This gives a dual binary operation $*^{-1}$ on $Q$ defined as $x*^{-1}y=z$ if $x=z*y$.
\par

\begin{examples}
The following are some typical examples of quandles.
\begin{itemize}
\item Let $X$ be a non-empty set with binary operation $x*y=y$ for all $x, y \in X$. Then $(X, *)$ is a quandle and is termed a {\it trivial quandle}. The trivial quandle on $n$ elements is denoted by $T_n$.
\item Given a  group $G$ and an integer $n$, defining the binary operation $x*y=y^{-n}xy^n$ turns $G$ into a quandle referred to as the $n$-\textit{conjugation quandle} of $G$, and is denoted by $\Conj_n(G)$.
\item Given a group $G$, defining the binary operation $x*y=yx^{-1}y$ turns $G$ into a quandle referred to as the \textit{core quandle} of $G$, and is denoted by $\Core(G)$. 
In particular, if $G$ is a cyclic group of order $n$, then it is called the \textit{dihedral quandle}.
\item Given a group $G$ and an automorphism $\psi \in \Aut(G)$, there is a quandle structure on $G$ given by $x *y= \psi(x y^{-1})y$, called an {\it Alexander quandle}.
\end{itemize}
\end{examples}

A {\it congruence} on a quandle $Q$ with a binary operation $*$ is an equivalence relation on $Q$ that is compatible with the operations $*$ and $*^{-1}$ on $Q$, that is, for a congruence  $\alpha$ on $Q$ and $a,b,c,d \in Q$, if $a, b$ are in the same class of $\alpha$ and $c, d$ are in the same class of $\alpha$, then $a*^{\epsilon}c, b *^{\epsilon}d$ are in the same class of $\alpha$, where $\epsilon=\pm 1$. Let $Q$ and $R$ be two quandles. Every homomorphism $f: Q \to R$ gives rise to a congruence on $Q$ which is the kernel of map $f$ given by
$$
\ker(f) = \{ (a,b)~:~ f(a)=f(b)\}.
$$
The first isomorphism theorem implies that $Q/\ker(f) \cong \im(f)$, that is, congruences on $Q$ and homomorphic images of $Q$ are equivalent. For a congruence $\alpha$ on $Q$ and $a\in Q$, $[a]_{\alpha}$ denotes the equivalence class of $a$. For $b \in Q$, define $[a]_{\alpha} * b = \{ q*b~:~ q \in [a]_{\alpha}\}$.

A congruence $\alpha$ on $Q$ is said to be a {\it finite index congruence} if $Q/\alpha$ is a finite quandle, and the {\it index} of $\alpha$ is the cardinality of $Q/\alpha$.

Let $Q$ be a quandle and $\alpha$ a congruence on $Q$. Then $\alpha$ is said to be {\it characteristic} if for $(x,y) \in \alpha$ implies $(f(x), f(y)) \in \alpha$ for all $f \in \Aut(Q)$, where $\Aut(Q)$ denotes the set of all automorphisms of $Q$. Moreover, $\alpha$ is said to be {\it fully invariant} if for any $(x,y) \in \alpha$, it holds that $(f(x), f(y)) \in \alpha$ for every $f\in \End(Q)$, where $\End(Q)$ denotes the set of all endomorphisms of $Q$.

\section{Endomorphism monoid of a residually finite quandle}\label{sec:endo_of_residually_finite_quandle}

A quandle $Q$ is termed {\it residually finite}, if for any distinct elements $a,b \in Q$, there exists a finite quandle $F$ and a homomorphism $f: Q \to F$ such that $f(a) \neq f(b)$. For a residually finite group $G$, both $\Conj_n(G)$ and $\Core(G)$ are residually finite quandles. In \cite{MR3981139, MR4075375}, it is proved that all link quandles are residually finite.

It is a well known fact that the automorphism group of a finitely generated residually finite group is residually finite. In \cite[Proposition 4.4, Proposition 4.5]{MR3981139}, residual finiteness of automorphism group of a certain class of residually finite quandles were studied.

\begin{lemma}\label{lemma:existence-of-fully-invariant-finite-index-congruence}
Let $Q$ be a quandle. Suppose $Q$ admits only finitely many congruences of index $n$ for each $n \geq 1$. Then for each finite index congruence $\gamma$, there exists a fully invariant finite index congruence $\bar{\gamma}$ such that $\bar{\gamma} \subseteq \gamma$.
\end{lemma}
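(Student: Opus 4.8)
The plan is to build $\bar\gamma$ as a finite intersection of finite–index congruences, mimicking the classical argument that underlies residual finiteness of automorphism groups of finitely generated residually finite groups. Concretely, let $n=|Q/\gamma|$ be the index of $\gamma$ and set
\[
\bar\gamma \;=\; \bigcap\{\, \delta \;:\; \delta \text{ is a congruence on } Q \text{ with } |Q/\delta| \leq n \,\}.
\]
Since $Q\times Q$ has index $1$ and $\gamma$ has index $n$, this family is nonempty and contains $\gamma$, so $\bar\gamma\subseteq\gamma$ is immediate; and an arbitrary intersection of congruences is again a congruence, so $\bar\gamma$ is a congruence. It then remains to verify two things: that $\bar\gamma$ has finite index, and that $\bar\gamma$ is fully invariant.

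For the finite-index claim, I would first note that the family being intersected is finite: a congruence of index at most $n$ has index in $\{1,\dots,n\}$, and by hypothesis there are only finitely many congruences of each such index, so the family is a finite union of finite sets, say $\{\delta_1,\dots,\delta_k\}$. Then the canonical map $Q/\bar\gamma \to \prod_{i=1}^{k} Q/\delta_i$ sending $[x]_{\bar\gamma}$ to $([x]_{\delta_1},\dots,[x]_{\delta_k})$ is well defined and injective (if $x,y$ lie in the same $\delta_i$-class for every $i$, then $(x,y)\in\bigcap_i\delta_i=\bar\gamma$), so $|Q/\bar\gamma|\leq \prod_{i=1}^{k}|Q/\delta_i|<\infty$.

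The heart of the argument is full invariance, and it rests on the observation that pulling a congruence back along an endomorphism does not increase its index. Given $f\in\End(Q)$ and a congruence $\delta$ with $|Q/\delta|\leq n$, consider $g=\pi_\delta\circ f\colon Q\to Q/\delta$, where $\pi_\delta$ is the quotient map. Its kernel is $\ker(g)=\{(x,y):(f(x),f(y))\in\delta\}=:f^{-1}(\delta)$, which is therefore a congruence, and by the first isomorphism theorem $Q/f^{-1}(\delta)\cong \im(g)\subseteq Q/\delta$, whence $|Q/f^{-1}(\delta)|\leq n$. So $f^{-1}(\delta)$ again belongs to the family defining $\bar\gamma$, and hence $\bar\gamma\subseteq f^{-1}(\delta)$. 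Consequently, for $(x,y)\in\bar\gamma$ and every $\delta$ in the family we get $(x,y)\in f^{-1}(\delta)$, i.e.\ $(f(x),f(y))\in\delta$; intersecting over all such $\delta$ yields $(f(x),f(y))\in\bar\gamma$. Thus $\bar\gamma$ is fully invariant.

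I do not expect a genuine obstacle here: the only load-bearing inputs are the hypothesis (finiteness of the family) and the elementary fact that $\End(Q)$-preimages of finite-index congruences are finite-index congruences of no larger index. The one point requiring slight care is that the correct family to intersect is ``index at most $n$'' rather than ``index exactly $n$'', precisely because a pullback $f^{-1}(\delta)$ may have strictly smaller index than $\delta$, so closure of the family under such pullbacks would fail if we fixed the index.
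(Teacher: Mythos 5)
Your proof is correct and follows essentially the same route as the paper: intersect all congruences of index at most $n$ (a finite family by hypothesis), and use that $(f\times f)^{-1}(\delta)=\ker(\pi_\delta\circ f)$ has index at most $n$ to conclude the intersection is fully invariant. Your write-up is in fact slightly more careful on the finite-index step (the explicit embedding of $Q/\bar\gamma$ into the product of the $Q/\delta_i$) and on why the family must be closed under pullbacks of index \emph{at most} $n$ rather than exactly $n$.
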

\begin{proof}
Let $\gamma$ be a congruence on $Q$ of index $n$ and $\mathcal{A}_n$ be the collection of all congruences on $Q$ of index at most $n$. Let $\bar{\gamma}=\underset{\alpha \in \mathcal{A}_n}{\cap}\alpha$. Then since $\mathcal{A}_n$ is a finite set, $\bar{\gamma}$ is a finite index congruence. We claim that $\bar{\gamma}$ is fully invariant. For any $\beta \in \mathcal{A}_n$ and $f \in \End(Q)$, note that $(f \times f)^{-1}(\beta)=\ker(\pi \circ f)$ and is of index at most $n$, where $\pi: Q \to Q/\bar{\gamma}$ is the natural homomorphism.
Now if $(a,b) \in \bar{\gamma}$, then $(a,b) \in \alpha$ for all $\alpha \in \mathcal{A}_n$, which implies that $(a,b) \in (f(a), f(b)) \in \alpha$ for all $\alpha \in \mathcal{A}_n$ and for all $f\in \End(Q)$, and hence $(f(a), f(b)) \in \bar{\gamma}$.
\end{proof}

A quandle $Q$ is said to be {\it Hopfian} if every onto quandle endomorphism of $Q$ is automorphism.
\begin{proposition}\label{prop:Hopfian_residually_finite_quandle}
Let $Q$ be a residually finite quandle. If for each $n \geq 1$, $Q$ has only finitely many congruences of index $n$, then $Q$ is Hopfian.
\end{proposition}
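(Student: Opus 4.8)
The plan is to mimic the classical argument that a finitely generated residually finite group is Hopfian, adapted to the quandle setting. Let $f \colon Q \to Q$ be a surjective endomorphism; I must show $f$ is injective. Suppose not, so there are distinct $a, b \in Q$ with $f(a) = f(b)$. First I would use residual finiteness to produce a finite index congruence $\gamma$ with $(a,b) \notin \gamma$, i.e. a homomorphism $Q \to F$ onto a finite quandle $F$ separating $a$ and $b$. Then, invoking Lemma \ref{lemma:existence-of-fully-invariant-finite-index-congruence} (which applies precisely because $Q$ has only finitely many congruences of each index $n$), I replace $\gamma$ by a fully invariant finite index congruence $\bar{\gamma} \subseteq \gamma$; since $\bar{\gamma} \subseteq \gamma$ we still have $(a,b) \notin \bar{\gamma}$.

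Next I would exploit full invariance of $\bar{\gamma}$ to see that each power $f^k$ of $f$ descends to a well-defined endomorphism $\bar{f}^k$ of the finite quandle $Q/\bar{\gamma}$: full invariance gives $(f \times f)(\bar\gamma) \subseteq \bar\gamma$, so $f$ induces $\bar f \colon Q/\bar\gamma \to Q/\bar\gamma$. Because $f$ is surjective on $Q$, each induced map $\bar f$ is surjective on the finite set $Q/\bar\gamma$, hence a bijection. Therefore $\bar f$ is a permutation of the finite set $Q/\bar\gamma$, so some power $\bar f^{m}$ (with $m \geq 1$) is the identity on $Q/\bar\gamma$; equivalently, $(x, f^m(x)) \in \bar\gamma$ for all $x \in Q$. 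The key point now is to derive a contradiction: pick $x \in Q$ with $f^{m-1}(x) = a$ — here I use that $f$, hence $f^{m-1}$, is surjective on $Q$ — and similarly I want to track $b$. Actually the cleanest route: since $f(a) = f(b)$, applying $f^{m-1}$ gives $f^m$ collapsing appropriate preimages; more directly, choose $u$ with $f^{m-1}(u) = a$. Then $f^m(u) = f(a) = f(b)$. Choose $v$ with $f^{m-1}(v) = b$; then $f^m(v) = f(b) = f^m(u)$. From $\bar f^m = \id$ we get $(u, f^m(u)) \in \bar\gamma$ and $(v, f^m(v)) \in \bar\gamma$, and since $f^m(u) = f^m(v)$ this forces $(u,v) \in \bar\gamma$. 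Applying the induced bijection $\bar f^{m-1}$ (a congruence-preserving map) to the pair $(u,v)$, we get $(f^{m-1}(u), f^{m-1}(v)) = (a,b) \in \bar\gamma$, contradicting $(a,b) \notin \bar\gamma$.

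So $f$ is injective, hence a bijective endomorphism; since quandle homomorphisms that are bijective are automorphisms (the inverse map is automatically a homomorphism, as $*^{-1}$ is definable from $*$), $f \in \Aut(Q)$, and $Q$ is Hopfian.

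The main obstacle is the bookkeeping in the finite quotient: one must be careful that the induced maps $\bar f$ are genuinely well-defined (this is exactly what full invariance of $\bar\gamma$ buys, and why Lemma \ref{lemma:existence-of-fully-invariant-finite-index-congruence} and the finiteness-of-congruences hypothesis are essential), and that passing the eventual-identity relation $(x, f^m(x)) \in \bar\gamma$ back through preimages correctly reproduces the separated pair $(a,b)$. Everything else — surjectivity of $\bar f$, a surjection of a finite set being a bijection, a permutation of a finite set having finite order — is routine.
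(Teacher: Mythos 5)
Your proof is correct and follows essentially the same route as the paper: separate $a$ and $b$ by a finite-index congruence, pass to a fully invariant one via Lemma \ref{lemma:existence-of-fully-invariant-finite-index-congruence}, and descend the surjection $f$ to a bijection $\bar f$ of the finite quotient. The only difference is your final step, which detours through a power $\bar f^{\,m}=\id$ and preimages under $f^{m-1}$; the paper concludes directly from injectivity of $\bar f$, since $\bar f([a]_{\bar\gamma})=[f(a)]_{\bar\gamma}=[f(b)]_{\bar\gamma}=\bar f([b]_{\bar\gamma})$ already forces $[a]_{\bar\gamma}=[b]_{\bar\gamma}$, the desired contradiction.
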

\begin{proof}
Let $\phi: Q \to Q$ be an epimorphism. We claim that $\phi$ is injective. To the contrary, suppose that there exist $p \neq q \in Q$ with $\phi(p) = \phi(q)$. Given that $Q$ is residually finite, there exists a finite index congruence $\alpha$ on $Q$ such that $(p,q) \notin \alpha$. According to Lemma \ref{lemma:existence-of-fully-invariant-finite-index-congruence}, there exists a fully invariant finite index congruence $\bar{\alpha}$ on $Q$ such that $\bar{\alpha} \subseteq \alpha$. Consequently, there is an onto homomorphism $\bar{\phi}: Q/\bar{\alpha} \to Q/\bar{\alpha}$. Since $\bar{\alpha}$ is of finite index, it follows that $\bar{\phi}$ is an automorphism. Because $\phi(p) =\phi(q)$, it follows that $\bar{\phi}([p]_{\bar{\alpha}})=\bar{\phi}([q]_{\bar{\alpha}})$, which leads to a contradiction because $[p]_{\bar{\alpha}}\neq [q]_{\bar{\alpha}}$. Thus $\phi$ is an automorphism and so $Q$ is Hopfian.
\end{proof}

\begin{theorem}\label{thm:residually-finite-endomorphism}
Let $Q$ be a residually finite quandle. Suppose that, for each $n \geq 1$, $Q$ admits only finitely many  congruences of index $n$. Then $\End(Q)$ is a residually finite monoid and $\Aut(Q)$ is a residually finite group.
\end{theorem}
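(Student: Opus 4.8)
The plan is to prove residual finiteness of $\End(Q)$ and $\Aut(Q)$ directly from the definition: given two distinct elements, I will produce a homomorphism onto a finite monoid (respectively finite group) that separates them, and the required finite quotients will come from the fully invariant finite index congruences supplied by Lemma \ref{lemma:existence-of-fully-invariant-finite-index-congruence}.

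First I would take $f \neq g$ in $\End(Q)$ and fix $a \in Q$ with $f(a) \neq g(a)$. Since $Q$ is residually finite, there is a finite quandle $F$ and a homomorphism $h \colon Q \to F$ with $h(f(a)) \neq h(g(a))$, so $\alpha := \ker(h)$ is a finite index congruence with $(f(a), g(a)) \notin \alpha$. By Lemma \ref{lemma:existence-of-fully-invariant-finite-index-congruence} (this is exactly where the hypothesis on the number of congruences of each index is used) there is a fully invariant finite index congruence $\bar{\alpha} \subseteq \alpha$, and still $(f(a), g(a)) \notin \bar{\alpha}$. The key observation is then that full invariance of $\bar{\alpha}$ says precisely that each $\phi \in \End(Q)$ induces a well-defined endomorphism $\bar{\phi}$ of $Q/\bar{\alpha}$ by $\bar{\phi}([x]_{\bar{\alpha}}) = [\phi(x)]_{\bar{\alpha}}$, and that $\phi \mapsto \bar{\phi}$ is a monoid homomorphism $\rho_{\bar{\alpha}} \colon \End(Q) \to \End(Q/\bar{\alpha})$, since it sends $\id_Q$ to $\id_{Q/\bar{\alpha}}$ and satisfies $\overline{\phi \circ \psi} = \bar{\phi} \circ \bar{\psi}$. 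Because $Q/\bar{\alpha}$ is finite, $\End(Q/\bar{\alpha})$ is a finite monoid, and $\rho_{\bar{\alpha}}(f) \neq \rho_{\bar{\alpha}}(g)$ because $\bar{f}([a]_{\bar{\alpha}}) = [f(a)]_{\bar{\alpha}} \neq [g(a)]_{\bar{\alpha}} = \bar{g}([a]_{\bar{\alpha}})$. Hence $\End(Q)$ is residually finite.

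For $\Aut(Q)$ I would run the same construction, noting that a fully invariant congruence is in particular characteristic, so for $\phi \in \Aut(Q)$ both $\phi$ and $\phi^{-1}$ descend to $Q/\bar{\alpha}$; from $\bar{\phi} \circ \overline{\phi^{-1}} = \overline{\id_Q} = \id_{Q/\bar{\alpha}} = \overline{\phi^{-1}} \circ \bar{\phi}$ one gets that $\bar{\phi}$ is an automorphism. Thus $\rho_{\bar{\alpha}}$ restricts to a group homomorphism $\Aut(Q) \to \Aut(Q/\bar{\alpha})$ into the finite group $\Aut(Q/\bar{\alpha})$, and the same element $a$ shows $\bar{f} \neq \bar{g}$; so $\Aut(Q)$ is residually finite. (One may also remark that, together with Proposition \ref{prop:Hopfian_residually_finite_quandle}, this recovers Hopfianity, though that is not needed here.)

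I do not expect a substantial obstacle: the mathematical content is concentrated in Lemma \ref{lemma:existence-of-fully-invariant-finite-index-congruence}, and the remainder is bookkeeping. The two points that genuinely require care are (i) checking that $\phi \mapsto \bar{\phi}$ respects composition and units, so that it is an honest monoid (and group) homomorphism rather than merely a separating map, and (ii) verifying that the map induced on $Q/\bar{\alpha}$ by an automorphism of $Q$ is again an automorphism, which is where characteristicity of $\bar{\alpha}$ (a consequence of full invariance) enters.
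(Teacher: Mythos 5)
Your proposal is correct and follows essentially the same route as the paper's proof: separate $f$ and $g$ at a point, use residual finiteness to find a finite-index congruence separating the images, refine it to a fully invariant one via Lemma \ref{lemma:existence-of-fully-invariant-finite-index-congruence}, and push down to the finite quotient via the induced monoid homomorphism $\End(Q)\to\End(Q/\bar{\alpha})$. Your treatment of $\Aut(Q)$ is slightly more explicit (landing directly in the finite group $\Aut(Q/\bar{\alpha})$) than the paper's one-line remark that $\Aut(Q)$ is a residually finite submonoid of $\End(Q)$, but the substance is the same.
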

\begin{proof}
Let $f,g \in \End(Q)$ such that $f\neq g$. Then there exists $q_0 \in Q$ such that $f(q_0)\neq g(q_0)$. As $Q$ is residually finite, there exists a finite index congruence $\alpha$ on $Q$ such that $(f(q_0), g(q_0)) \notin \alpha$. According to Lemma \ref{lemma:existence-of-fully-invariant-finite-index-congruence} there exists a fully invariant finite index congruence $\bar{\alpha}$ on $Q$ such that $\bar{\alpha} \subseteq \alpha$.
Thus there is a monoid homomorphism from $\End(Q)$ to $\End(Q/\bar{\alpha})$, defined as $h \mapsto \bar{h}$, where $\bar{h}([q]_{\bar{\alpha}})=[h(q)]_{\bar{\alpha}}$. Now since $(f(q_0), h(q_0)) \notin \alpha$, thus $(f(q_0), h(q_0)) \notin \bar{\alpha}$, which implies that $\bar{f} \neq \bar{g}$. Given the fact that $Q/\bar{\alpha}$ is a finite quandle, we deduce that $\End(Q)$ is a residually finite monoid.
Since $\Aut(Q)$ is a submonoid of $\End(Q)$, thus it is a residually finite monoid and hence a residually finite group.
\end{proof}

\begin{corollary}
Let $Q$ be a finitely generated quandle. Then $\End(Q)$ is a residually finite monoid and $\Aut(Q)$ is a residually finite group.
\end{corollary}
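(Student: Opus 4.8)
The plan is to deduce this from Theorem~\ref{thm:residually-finite-endomorphism}: it suffices to check that a finitely generated (residually finite) quandle $Q$ satisfies the standing hypothesis of that theorem, namely that for each $n \geq 1$ it has only finitely many congruences of index $n$. So the only real content is the claim: \emph{a finitely generated quandle has finitely many congruences of index $n$ for every $n \geq 1$.} Fix a finite generating set $\{x_1, \dots, x_k\}$ of $Q$.

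First I would observe that every congruence $\gamma$ of index $n$ is the kernel of the canonical surjection $Q \to Q/\gamma$ onto a quandle of cardinality $n$. Choosing a bijection $Q/\gamma \to \{1,\dots,n\}$ and transporting the operation, we get a quandle structure $F_\gamma$ on the fixed set $\{1,\dots,n\}$ together with a homomorphism $f_\gamma \colon Q \to F_\gamma$ with $\ker f_\gamma = \gamma$. Since $\gamma$ is recovered as $\ker f_\gamma$, the assignment $\gamma \mapsto f_\gamma$ is an injection from the set of congruences of index $n$ into
$$
\bigsqcup_{F} \Hom(Q, F),
$$
where $F$ ranges over the quandle structures on the fixed underlying set $\{1,\dots,n\}$.

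Next I would check that this last set is finite. There are at most $n^{n^2}$ binary operations on $\{1,\dots,n\}$, hence only finitely many quandle structures $F$ on it; and for each such $F$, a quandle homomorphism $Q \to F$ is determined by the images of $x_1,\dots,x_k$, so $|\Hom(Q,F)| \leq n^k$. Thus $Q$ has at most $n^{n^2}\cdot n^k$ congruences of index $n$. Combined with residual finiteness of $Q$, Theorem~\ref{thm:residually-finite-endomorphism} then gives that $\End(Q)$ is a residually finite monoid and $\Aut(Q)$ a residually finite group.

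There is no serious obstacle here; the corollary is a routine specialization, and since only a \emph{finite} bound is needed the crude count above is enough — in particular one need not track quotients up to isomorphism carefully. The one point worth stating cleanly is the reduction from "congruence of index $n$" to "homomorphism onto a fixed $n$-element quandle", i.e. the injection $\gamma \mapsto f_\gamma$ above, which is exactly the quandle analogue of the classical argument for residual finiteness of the automorphism group of a finitely generated residually finite group.
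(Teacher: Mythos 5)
Your proof is correct and follows essentially the same route as the paper: both reduce to Theorem~\ref{thm:residually-finite-endomorphism} by noting that there are only finitely many quandles of order $n$ and, since $Q$ is finitely generated, only finitely many homomorphisms from $Q$ to each of them, hence finitely many congruences of index $n$; your explicit count of binary operations and of maps on generators simply replaces the paper's citation of the matrix representation of finite quandles. You are also right to read the hypothesis as ``finitely generated \emph{residually finite}'' quandle --- the theorem being invoked requires residual finiteness, and the paper's introduction confirms that this is the intended statement.
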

\begin{proof}

Each finite quandle consisting of $n$ elements can be depicted by an integral $n \times n$ matrix (see \cite{MR2175299}). Since there are only finitely many such matrices, it follows that there are only finitely many quandles of order $n$. 
The quandle $Q$ being finitely generated implies that there are only a finite number of homomorphisms from $Q$ to any finite quandle $F$. Consequently, $Q$ admits only a finite number of open congruences of index $n$ for $n\geq 1$. Now the proof follows from Theorem \ref{thm:residually-finite-endomorphism}.
\end{proof}

An infinite trivial quandle $T$ is residually finite but it automorphism group $\Aut(T)$ is the symmetric group on $T$, which is not residually finite. Therefore the condition in Theorem \ref{thm:residually-finite-endomorphism} is not redundant.

\section{Profinite quandles}\label{sec:profinite_quandles}
Given a quandle $Q$, we denote the collection of congruences on $Q$ by $\Ker(Q)$ and the collection of finite index congruences on $Q$ by $\underset{< \infty}{\Ker(Q)}$. Moreover, let $\mathcal{K}(Q)= \underset{{\alpha \in \ker(Q)}}{\cap} \alpha$ and $\underset{< \infty}{\mathcal{K}(Q)}=\underset{{\alpha \in \underset{< \infty}{\Ker}(Q)}}{\cap} \alpha.$ Note that the diagonal on the set $Q \times Q$ is a subset of both $\mathcal{K}(Q)$ and $\underset{< \infty}{\mathcal{K}(Q)}$. The set $\underset{< \infty}{\mathcal{K}(Q)}$ is called the {\it profinite kernel} of $Q$.
\begin{proposition}\label{pro:1.6}
A quandle $Q$ is residually finite if and only if  $\underset{< \infty}{\mathcal{K}(Q)}$ is the diagonal on $Q \times Q$. 
\end{proposition}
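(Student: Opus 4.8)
The plan is to unwind both directions directly from the definitions, using the correspondence between finite index congruences and homomorphisms onto finite quandles that was recorded via the first isomorphism theorem in Section~\ref{sec:preliminaries}. Note first that the diagonal $\Delta = \{(q,q) : q \in Q\}$ is always contained in $\underset{< \infty}{\mathcal{K}(Q)}$, since every congruence is reflexive; so the content of the statement is the reverse inclusion $\underset{< \infty}{\mathcal{K}(Q)} \subseteq \Delta$ being equivalent to residual finiteness.

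For the forward direction, I would assume $Q$ is residually finite and take $(a,b) \in \underset{< \infty}{\mathcal{K}(Q)}$; the goal is $a = b$. If $a \neq b$, residual finiteness provides a finite quandle $F$ and a homomorphism $f \colon Q \to F$ with $f(a) \neq f(b)$. Then $\ker(f)$ is a congruence on $Q$, and since $Q/\ker(f) \cong \im(f) \subseteq F$ is finite, $\ker(f) \in \underset{< \infty}{\Ker}(Q)$. But $(a,b) \notin \ker(f)$ contradicts $(a,b) \in \underset{< \infty}{\mathcal{K}(Q)} = \bigcap_{\alpha \in \underset{< \infty}{\Ker}(Q)} \alpha$. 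Hence $a = b$, giving $\underset{< \infty}{\mathcal{K}(Q)} = \Delta$.

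For the converse, I would assume $\underset{< \infty}{\mathcal{K}(Q)} = \Delta$ and take distinct $a, b \in Q$. Then $(a,b) \notin \Delta = \underset{< \infty}{\mathcal{K}(Q)}$, so by definition of the intersection there is some finite index congruence $\alpha$ on $Q$ with $(a,b) \notin \alpha$. The quotient $Q/\alpha$ is then a finite quandle, and the natural projection $\pi \colon Q \to Q/\alpha$ is a homomorphism with $\pi(a) = [a]_\alpha \neq [b]_\alpha = \pi(b)$. This is exactly the witness required by the definition of residual finiteness, so $Q$ is residually finite.

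Both directions are short and essentially bookkeeping; there is no real obstacle here, as everything reduces to the stated equivalence between congruences and homomorphic images together with the observation that a congruence has finite index precisely when the associated quotient quandle is finite. The only point to be careful about is to phrase the argument so that the equivalence "finite index congruence $\leftrightarrow$ homomorphism onto a finite quandle" is invoked in the correct direction on each side of the proof.
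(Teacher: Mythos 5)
Your proof is correct and follows essentially the same route as the paper's: one direction by contradiction/contraposition using the kernel of a separating homomorphism as a finite index congruence, the other by reading off a separating quotient map from a finite index congruence missing $(a,b)$. Your version is slightly more explicit about the correspondence between finite index congruences and homomorphisms onto finite quandles, but there is no substantive difference.
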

\begin{proof}
Let $(q_1, q_2) \in \underset{< \infty}{\mathcal{K}(Q)}$ such that $q_1 \neq q_2$. Then $(q_1, q_2) \in \alpha$ for all $\alpha \in \underset{< \infty}{\mathcal{K}(Q)}$ which implies that $Q$ is not residually finite. Conversely, if $\underset{< \infty}{\mathcal{K}(Q)}$ is the diagonal on $Q \times Q$, then for each $(q_1, q_2) \in Q \times Q$ such that $q_1 \neq q_2$, there must exists $\alpha \in \underset{< \infty}{\Ker} $ such that $(q_1,q_2) \notin \alpha.$ Thus $Q$ is residually finite.
\end{proof}

A set $I$ equipped with a binary relation $\leq$ is said to be {\it partially ordered} set if $\leq$ is both reflexive and transitive. A {\it directed set} is a partially ordered set $I$ satisfying the following condition: for all $i, j \in I$, there always exists an element $k \in I$ such that $i \leq k$ and $j \leq k$.

Let $(I,\leq)$ be a directed set. A {\it projective system} of quandles over $I$ consists of the following data:
\begin{enumerate}
\item a family of quandles $(Q_i)_{i \in I}$ indexed by $I$,
\item for each $i,j \in I$ such that $i \leq j$, a homomorphism $\varphi_{ij}: Q_j \to Q_i$ satisfying the following conditions:
\begin{enumerate}
\item[(2.1)] $\varphi_{ii}=\id_{Q_i}$ (identity map on $Q_i$) for all $i \in I$,
\item[(2.2)] $\varphi_{ij} \circ \varphi_{jk} = \varphi_{ik}$ for all $i,j,k \in I$ such that $i\leq j\leq k$.
\end{enumerate}
\end{enumerate}
The above projective system is denoted by $(Q_i, \varphi_{ij}, I)$.
\par

Consider a projective system of quandles $(Q_i, \varphi_{ij}, I)$ and a quandle $Q$. Then, a family of homomorphisms $\varphi_i: Q \to Q_i$, mapping $Q$ into each $Q_i$, is termed  {\it compatible}, if for all $i \leq j$, it holds that $\varphi_{ij} \circ \varphi_j = \varphi_i$, that is, the following diagram commutes:

\begin{center}

\begin{tikzcd}
                                & Q \arrow[ld, "\varphi_j"'] \arrow[rd, "\varphi_i"] &     \\
Q_j \arrow[rr, "\varphi_{ij}"'] &                                                    & Q_i
\end{tikzcd}

\end{center}

A {\it projective limit} of the projective system $(Q_i, \varphi_{ij}, I)$ of quandles is a quandle $Q=\underset{i \in I}{\varprojlim} ~Q_i$ along with a compatible family of homomorphisms $\varphi_i: Q \to Q_i$ satisfying the following universal property: for any quandle $R$ and any compatible family of homomorphisms $r_i:R \to Q_i$, there exists a unique homomorphism $\theta: R \to Q$ such that $\varphi_i \circ \theta = r_i$ for all $i \in I$.

Consider a projective system of quandles $(Q_i, \varphi_{ij}, I)$, and let $P= \prod_{i \in I} Q_i$ be the Cartesian product of the quandles $Q_i$. It is easy to verify that if the set
$$
Q=\{ (q_i) \in P~:~ \varphi_{ij}(q_j)=q_i ~\textrm{for all} ~i,j \in I ~\textrm{s.t}~ i \leq j\}
$$
is non-empty, then it is a subquandle of $P$ and is the projective limit of $(Q_i, \varphi_{ij}, I)$.

The projective limit of a projective system of a residually finite quandles is residually finite, see \cite[Corollary 3.5]{MR3981139}.
\begin{example}\label{exa:1.7}
Let $I$ be a directed set and $(Q_i)_{i \in I}$ be a family of quandles. For each $i \in I$, fix an element $q_i\in Q_i$. Now for $i , j \in I$ with $i \leq j$, define $\varphi_{ij}: Q_j \to Q_i$ as $\varphi_{ij}(Q_j)=q_i$ and $\varphi_{ii}=\id_{Q_i}$. Then $(Q_i, \varphi_{ij}, I)$ is a projective system with projective limit $Q=\{(q_i)\}$, which is a trivial quandle with one element.
\end{example}

\begin{example}\label{exa:1.8}
Let $Q$ be a quandle. Then $\Ker(Q)$ is a directed set under the reverse inclusion, that is, for $\alpha, \beta \in \Ker(Q)$, $\alpha \leq \beta$ if and only if $\alpha \supseteq \beta$. The family $(Q/\gamma)_{\gamma \in \Ker(Q)}$ along with the canonical quotient homomorphisms $\varphi_{\alpha, \beta}: Q/\beta \to Q/\alpha$, for all $\alpha, \beta \in \Ker(Q)$ with $\alpha \leq \beta$, give rise to a projective system of quandles $(Q/\alpha, \varphi_{\alpha \beta}, \Ker(Q))$. Define a homomorphism $\varphi: \underset{\gamma \in \Ker(Q)}{\varprojlim} \to Q$ as $([x]_{\gamma})_{\gamma \in \Ker(Q)} \to [x]_{\id_Q}.$ It is clear that the map $\varphi^{-1}:Q \to \underset{\gamma \in \Ker(Q)}{\varprojlim} Q/\gamma$ defined as $x \mapsto ([x]_{\gamma})_{\gamma \in \Ker(Q)}$ is the inverse of $\varphi$. Thus the projective limit of the system is $Q$ itself.
\end{example}

\begin{example}\label{exa:1.9}
Let $Q$ be a quandle. Then $\underset{< \infty}{\Ker(Q)}$ is a directed set under the reverse inclusion. The family $(Q/\gamma)_{\gamma \in \underset{< \infty}{\Ker(Q)}}$ along with the canonical quotient homomorphisms $\varphi_{\alpha, \beta}: Q/\beta \to Q/\alpha$, for all $\alpha, \beta \in \underset{< \infty}{\Ker(Q)}$ with $\alpha \leq \beta$, give rise to a projective system of quandles $(Q/\alpha, \varphi_{\alpha \beta}, \underset{< \infty}{\Ker(Q)})$. The projective limit of this system is called the {\it profinite completion} of the quandle $Q$ and is denoted by $\hat{Q}$. Note that there is a canonical homomorphism $\eta: Q \to \hat{Q}$ defined as $\eta(q) = ([q]_{\gamma})_{\gamma \in \underset{< \infty}{\Ker(Q)}}$ and the kernel of $\eta$ is the profinite kernel of $Q$, that is, $\ker(\eta)=\underset{< \infty}{\mathcal{K}(Q)}$.
\end{example}

If $Q$ is a finite quandle, then its profinite completion is $Q$ itself.

\begin{proposition}\label{pro:1.10}
Let $Q$ be a quandle and $\hat{Q}$ its profinite completion. Then $Q$ is residually finite if and only if the map $\eta: Q \to \hat{Q}$ is injective.
\end{proposition}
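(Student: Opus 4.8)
The plan is to chain together the two facts already in hand. Example \ref{exa:1.9} tells us that the kernel of the canonical homomorphism $\eta\colon Q\to\hat Q$ is exactly the profinite kernel $\underset{<\infty}{\mathcal{K}(Q)}$, and Proposition \ref{pro:1.6} characterizes residual finiteness of $Q$ as the statement that $\underset{<\infty}{\mathcal{K}(Q)}$ equals the diagonal on $Q\times Q$. So the whole argument reduces to the elementary observation that a quandle homomorphism is injective precisely when its kernel congruence is the diagonal.

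Concretely, I would first recall $\ker(\eta)=\underset{<\infty}{\mathcal{K}(Q)}$, which is immediate from the formula $\eta(q)=([q]_\gamma)_{\gamma\in\underset{<\infty}{\Ker}(Q)}$: for $q_1,q_2\in Q$ one has $\eta(q_1)=\eta(q_2)$ if and only if $[q_1]_\gamma=[q_2]_\gamma$ for every finite index congruence $\gamma$, i.e.\ if and only if $(q_1,q_2)\in\bigcap_{\gamma\in\underset{<\infty}{\Ker}(Q)}\gamma=\underset{<\infty}{\mathcal{K}(Q)}$. Then, since a map of sets is injective exactly when its kernel relation contains only pairs $(q,q)$, the map $\eta$ is injective if and only if $\ker(\eta)$ is the diagonal on $Q\times Q$, hence if and only if $\underset{<\infty}{\mathcal{K}(Q)}$ is the diagonal. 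Applying Proposition \ref{pro:1.6} to this last condition finishes both directions simultaneously.

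There is no substantive obstacle here; the statement is essentially a translation of Proposition \ref{pro:1.6} through the language of the profinite completion. The only points requiring a line of care are: (i) correctly quoting Example \ref{exa:1.9} for the identification of $\ker(\eta)$; and (ii) remembering that the diagonal is automatically contained in $\underset{<\infty}{\mathcal{K}(Q)}$, so that the real content of the equivalence is the reverse containment, which is precisely what injectivity of $\eta$ supplies. If desired, one could also phrase the proof without reference to Proposition \ref{pro:1.6} by repeating its short separation argument directly, but invoking it keeps the proof to a few lines.
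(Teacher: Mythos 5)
Your proof is correct and follows essentially the same route as the paper: both arguments identify $\ker(\eta)$ with the profinite kernel $\underset{<\infty}{\mathcal{K}(Q)}$ and then invoke Proposition \ref{pro:1.6} to translate injectivity of $\eta$ into residual finiteness. Your version is, if anything, slightly more explicit about the kernel identification from Example \ref{exa:1.9}, but there is no substantive difference.
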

\begin{proof}
If $Q$ is residually finite, then by Proposition \ref{pro:1.6}, the profinite kernel of $Q$ is the diagonal on $Q \times Q$. Thus the map $\eta:Q \to \hat{Q}$ is injective. Conversely, suppose that $\eta$ is injective. Then for any distinct elements $p,q \in Q$, we have $\eta(p) \neq \eta(q)$, which implies that $(p,q) \not \in \underset{< \infty}{\mathcal{K}(Q)}$. By Proposition \ref{pro:1.6}, we get that $Q$ is residually finite.
\end{proof}

\begin{definition}
A quandle $Q$ is called {\it profinite} if it is the limit of some projective system of finite quandles.
\end{definition}

By definition, all finite quandles are profinite. From \cite[Corollary 3.5]{MR3981139}, we know that profinite quandles are residually finite.

\begin{example}\label{exa:functorial_properties}
If a group $G$ is the limit of projective system of groups $(G_i, \varphi_{ij}, I)$, then one can check that $\Conj_n(G)=  \underset{i \in I}{\varprojlim} \Conj_n(G_i)$, where $n \in \mathbb{Z}$, and $\Core(G)= \underset{i \in I}{\varprojlim} \Core(G_i)$. Thus if $G$ is a profinite group, then $\Conj_n(G)$ and $\Core(G)$ are profinite quandles.
\end{example}

\begin{example}\label{exa:projective_limit_of_trivial_quandles}
For each $n \in \mathbb{N}$, let $T_n=\{1,2, \ldots, n\}$ be the trivial quandle with $n$ elements. For $p,q \in \mathbb{N}$, such that $p \leq q$, define a map $\varphi_{p,q}: T_q \to T_p$ such that
\begin{align*}
\varphi_{p,q}(x)=\begin{cases}
x ~~\textrm{ if } x \leq p\\
p~~\textrm{ otherwise}
\end{cases}
\end{align*}
Clearly, $(T_q, \phi_{p,q}, \mathbb{N})$ forms a projective system of quandles. It is easy to see that its projective limit  is the set $\{ \overbar{x}_n, \dot{x}~|~ n\in \mathbb{N}\}$, which is the trivial quandle with countably many infinite elements. Here, $\overbar{x}_n$ denotes the sequence $(x_i)_{i \in \mathbb{N}}$, where 
\begin{align*}
\begin{cases}
x_i=i ~\textrm{ for } i \leq n\\
x_j = n ~ \textrm{ for } j \geq n
\end{cases},
\end{align*}
and $\dot{x}$ is the sequence $(\dot{x}_i)_{i \in \mathbb{N}}$,  with $\dot{x}_i=i$ for all $i \in \mathbb{N}$. Thus the trivial quandle with countably many infinite elements is a profinite quandle.
\end{example}

It is easy to observe that the class of profinite quandles is closed under taking finite Cartesian products. Thus there are non-trivial profinite quandles which are countably infinite.

It is a well-known fact that a countable profinite group is a finite group. However, Example \ref{exa:projective_limit_of_trivial_quandles} illustrates that this is not the case for quandles. Thus it would be interesting to study the profinite property in case of quandles.

\section{Topological characterization of profinite quandles}\label{sec:topological_characterization_of_profinite_quandles}
A {\it topological quandle} $Q$ is a quandle along with a topological structure under which the binary operation $* : Q \times Q \to Q$ is continuous, and the right multiplication map $R_q: Q \to Q$, mapping $p \mapsto p *q$, is a homeomorphism. A subspace $P$ of a topological quandle $Q$ is called a {\it subquandle} if it satisfies the condition that $a*b \in P$ for all $a, b \in P$, and for every $y, b \in P$, the mapping $y \mapsto y *b$ is a homeomorphism of $P$ onto itself.

A topological quandle is said to be {\it connected, Hausdorff, compact}, and {\it topologically disconnected} if the corresponding property holds for its underlying topological space.

Let $Q$ be a compact quandle. An equivalence relation $\alpha$ on $Q$ is said to be {\it open} ({\it closed}) if it is open (closed) in $Q \times Q$ under the product topology. If $\alpha$ is an open equivalence on $Q$, then the classes of $\alpha$ are both open and closed. A congruence $\alpha$ on $Q$ is open if and only if the quotient quandle $Q/\alpha$ is a finite quandle with the discrete topology.

Consider a topological quandle $Q$. For any element $a \in Q$, denote $Q^a$ as the connected component of $a$, which is the maximal connected subset of $Q$ containing $a$.
\begin{proposition}
Let $Q$ be a topological quandle and $a \in Q$ a fixed element. Then $Q^a$ is a subquandle of $Q$.
\end{proposition}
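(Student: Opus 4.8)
The claim is that for a topological quandle $Q$ and a fixed $a \in Q$, the connected component $Q^a$ is a subquandle. By definition of subquandle in this topological setting, I need to check two things: first, that $Q^a$ is closed under the operation $*$, i.e. $x*y \in Q^a$ for all $x,y \in Q^a$; and second, that for each $b \in Q^a$ the right multiplication $R_b \colon y \mapsto y*b$ restricts to a homeomorphism of $Q^a$ onto itself. The only tool I expect to need is the standard topological fact that the continuous image of a connected set is connected, together with the fact that connected components are the maximal connected subsets.

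For the first part, fix $x, y \in Q^a$. Since $Q^a$ is connected and right multiplication $R_y \colon Q \to Q$ is continuous (indeed a homeomorphism, by the axioms of a topological quandle), the image $R_y(Q^a) = Q^a * y$ is a connected subset of $Q$. It contains the point $y * y = y$ (using axiom \textbf{Q1}), which lies in $Q^a$; hence $R_y(Q^a)$ is a connected subset of $Q$ meeting $Q^a$, so by maximality of the connected component $Q^a$ we get $R_y(Q^a) \subseteq Q^a$. In particular $x * y \in Q^a$, which gives closure under $*$. An identical argument with $*^{-1}$ in place of $*$ (the map $R_y^{-1} = $ right multiplication by $y$ under $*^{-1}$ is also continuous, being the inverse homeomorphism) shows $Q^a *^{-1} y \subseteq Q^a$.

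For the second part, fix $b \in Q^a$. We have just shown $R_b(Q^a) \subseteq Q^a$ and $R_b^{-1}(Q^a) \subseteq Q^a$; applying $R_b$ to the latter inclusion gives $Q^a \subseteq R_b(Q^a)$, so in fact $R_b(Q^a) = Q^a$. Thus $R_b$ restricts to a bijection of $Q^a$ onto itself, and since $R_b$ and its inverse are continuous on all of $Q$, the restriction is a homeomorphism of $Q^a$. This verifies both conditions in the definition of a subquandle. I do not anticipate a genuine obstacle here; the one point requiring a little care is to use \textbf{Q1} to produce a point of $Q^a$ inside the image $R_y(Q^a)$, so that maximality of the component can be invoked — without a common point one could not conclude the image lands back in $Q^a$.
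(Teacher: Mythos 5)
Your proof is correct and follows essentially the same route as the paper: both arguments exploit that $R_y$ is a homeomorphism of $Q$ fixing $y$ (by axiom \textbf{Q1}), so it must carry the connected component $Q^a = Q^y$ onto itself, and likewise for $*^{-1}$. Your version just spells out the two-sided containment via maximality of components, where the paper invokes directly that a homeomorphism permutes connected components.
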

\begin{proof}
Let $x \in Q^a$. According to the definition of connected component, $Q^a= Q^x$. Since $R_x$ is a homeomorphism of $Q$ onto itself, the set $\{b *x ~:~ b \in Q^a\}$ is $Q^x$. Similarly $\{b *^{-1}x ~:~ b \in Q^a\}$ is $Q^x$. Thus $Q^a$ is a subquandle of $Q$. 
\end{proof}

A projective system of topological quandles is the projective system of quandles along with the requirement that all the homomorphisms are continuous.

The following proposition is easy to prove.

\begin{proposition}\label{prop:topology_on_projective_limit}
Suppose $(Q_i, \varphi_{ij}, I)$ is a projective system of topological quandles, where each $\varphi_{ij}$ is continuous and $Q$ its projective limit. Then $Q$ inherits a natural topological quandle structure as a subquandle of the product $P=\underset{i \in I}{\prod}~Q_i$, where $P$ has the product topology, and for each $i \in I$, the homomorphisms $\varphi_i:Q \to Q_i$ are continuous. Moreover, $Q$ is unique up to topological quandle isomorphism.
\end{proposition}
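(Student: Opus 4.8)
The plan is to transport everything through the product. First I would put the product topology on $P=\prod_{i\in I}Q_i$ and observe that $P$ is itself a topological quandle: its binary operation is computed coordinatewise from the continuous operations on the $Q_i$, hence is continuous as a map into $P$ by the universal property of the product topology, and each right multiplication $R_q$ on $P$ is the coordinatewise product of the homeomorphisms $R_{q_i}$, hence a homeomorphism of $P$. Then I would recall, from the paragraph preceding Example~\ref{exa:1.7}, that $Q=\{(q_i)\in P:\varphi_{ij}(q_j)=q_i\text{ whenever }i\le j\}$ is a subquandle of $P$ (assuming it is non-empty), and I would equip $Q$ with the subspace topology.

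Next I would check that this subspace topology makes $Q$ a topological quandle. Continuity of $*$ on $Q$ is immediate by restriction. For the right multiplications, fix $q\in Q$: the map $R_q$ on $P$ is a homeomorphism and carries $Q$ into $Q$ because $Q$ is a subquandle, and its inverse, which is right multiplication by $q$ with respect to $*^{-1}$, likewise carries $Q$ into $Q$; hence $R_q$ restricts to a continuous bijection $Q\to Q$ with continuous inverse, i.e.\ a homeomorphism of $Q$. (Bijectivity onto $Q$ also follows directly from axiom \textbf{Q2} applied inside the subquandle $Q$.) This is the one place where a little care is warranted, though it is routine. I would also note that the coordinate maps $\varphi_i:Q\to Q_i$ are the restrictions of the continuous projections $P\to Q_i$, hence continuous, and that they form a compatible family by construction.

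To identify $Q$ with the projective limit in the topological category, I would take an arbitrary topological quandle $R$ together with a compatible family of continuous homomorphisms $r_i:R\to Q_i$. The algebraic projective-limit property already produces the unique quandle homomorphism $\theta:R\to Q$ with $\varphi_i\circ\theta=r_i$, namely $\theta(r)=(r_i(r))_{i\in I}$, and $\theta$ is automatically continuous because $Q$ carries the subspace topology from the product and each composite $\varphi_i\circ\theta=r_i$ is continuous. Uniqueness of $\theta$ as a continuous homomorphism is inherited from its uniqueness as a quandle homomorphism.

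Finally, uniqueness of $Q$ up to topological quandle isomorphism is the standard universal-property argument: if $Q'$ with maps $\varphi_i'$ is another projective limit, the two universal properties supply mutually inverse homomorphisms $Q\to Q'$ and $Q'\to Q$ (the composites must be the identities by the uniqueness clauses), and by the previous paragraph both are continuous, so together they form a topological quandle isomorphism. The only genuine subtlety in the whole argument is the homeomorphism property of the restricted right multiplications; everything else is a formal consequence of the universal property of the product topology.
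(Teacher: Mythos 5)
Your proof is correct; the paper itself offers no argument (it only remarks that the proposition ``is easy to prove''), and your write-up is exactly the standard verification one would supply: coordinatewise continuity of $*$ and of the right multiplications on $P$, closure of $Q$ under $*$ and $*^{-1}$ so that the restricted right multiplications are homeomorphisms of $Q$, continuity of $\theta$ from the universal property of the subspace-of-product topology, and the usual uniqueness argument. You correctly flag the only point needing care, namely that $R_q$ restricts to a homeomorphism of $Q$ because $Q$ is closed under both $*$ and $*^{-1}$ (the latter since the connecting maps preserve $*^{-1}$).
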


\begin{lemma}\label{lemma:projective_limit_is_closed_in_Hausdorff}
Let $(Q_i, \varphi_{ij},I)$ be a projective system of compact Hausdorff topological quandles. Then the projective limit $Q=\underset{i \in I}{\varprojlim}~{Q_i}$ is a closed subquandle of $P=\underset{i \in I}{\prod}~Q_i$. In particular, if for every $i \in I$, $Q_i$ is Hausdorff and compact, then $Q$ is Hausdorff and compact. 
\end{lemma}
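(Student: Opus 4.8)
The plan is to realize $Q = \underset{i \in I}{\varprojlim}~Q_i$ explicitly as the subset of the product $P = \underset{i \in I}{\prod}~Q_i$ consisting of all coherent families, and to show that this subset is closed. Once closedness is established, the ``in particular'' clause follows immediately: Tychonoff's theorem gives that $P$ is compact (being a product of compact spaces), a product of Hausdorff spaces is Hausdorff, and a closed subspace of a compact Hausdorff space is compact Hausdorff. Since by Proposition \ref{prop:topology_on_projective_limit} the projective limit carries precisely the subspace topology inherited from $P$, this will prove the lemma.

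For the closedness, I would write $Q = \underset{i \leq j}{\bigcap}~Q_{ij}$, where for each pair $i \leq j$ in $I$ we set
$$
Q_{ij} = \{ (q_k) \in P ~:~ \varphi_{ij}(q_j) = q_i \}.
$$
An arbitrary intersection of closed sets is closed, so it suffices to show each $Q_{ij}$ is closed in $P$. Fix $i \leq j$ and consider the two maps $P \to Q_i$ given by $p = (q_k) \mapsto \varphi_{ij}(q_j)$ (that is, $\varphi_{ij} \circ \mathrm{pr}_j$) and $p \mapsto q_i$ (that is, $\mathrm{pr}_i$); both are continuous, the first because $\varphi_{ij}$ is continuous by hypothesis and projections out of a product are continuous, the second because it is a projection. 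Then $Q_{ij}$ is the equalizer of these two continuous maps into the Hausdorff space $Q_i$, and the equalizer of two continuous maps into a Hausdorff space is closed in the source: concretely, $Q_{ij}$ is the preimage of the diagonal $\Delta_{Q_i} \subseteq Q_i \times Q_i$ under the continuous map $p \mapsto (\varphi_{ij}(q_j), q_i)$, and $\Delta_{Q_i}$ is closed because $Q_i$ is Hausdorff. Hence $Q_{ij}$ is closed, and therefore so is $Q$.

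The only mild subtlety worth flagging is that we should know $Q$ is non-empty before calling it ``the'' projective limit; this is handled by the standard fact (noted already in the excerpt for the set-theoretic construction, and valid here since each $Q_i$ is compact Hausdorff and the system is directed) that a projective limit of non-empty compact Hausdorff spaces over a directed set is non-empty, so the closed set $Q$ is a genuine compact Hausdorff quandle. Beyond that, there is essentially no obstacle here: the argument is the classical one for projective limits of topological groups, and none of the quandle axioms or the homeomorphism condition on right multiplications play any role in the closedness argument — they only matter insofar as Proposition \ref{prop:topology_on_projective_limit} already guarantees the limit is a topological quandle. If one wants to avoid invoking Proposition \ref{prop:topology_on_projective_limit}, the main step to spell out is that the coherent-family subquandle, with the subspace topology from $P$, satisfies the universal property, but this is routine diagram-chasing.
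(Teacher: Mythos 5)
Your proof is correct and follows essentially the same route as the paper: both write $Q$ as the intersection over pairs $i \leq j$ of the preimages of the diagonal of $Q_i \times Q_i$ under the continuous map $(q_k) \mapsto (\varphi_{ij}(q_j), q_i)$, use Hausdorffness to close the diagonal, and then invoke Tychonoff for the ``in particular'' clause. Your added remark on non-emptiness is not needed for this lemma (the paper defers it to Theorem \ref{thm:existence_of_projective_limits}), but it does no harm.
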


\begin{proof}
Let $i,j \in I$ such that $i \leq j$. Consider the following continuous homomorphism:
\begin{center}
\begin{tikzcd}
P \arrow[r, "\pi_i \times \pi_j"] & Q_i \times Q_j \arrow[r, "\id_i \times \varphi_{ij}"] & Q_i \times Q_i,
\end{tikzcd}
\end{center}
where for each $i \in I$, $\pi_i: P \to Q_i$ is the projection map. Given that $Q_i$ is Hausdorff for each $i \in I$, we note that the diagonal $D$ on $Q_i \times Q_i$ is closed and consequently, the set $P_{i,j}=(\pi_i \times \pi_j)^{-1} \circ \big( (\id_i \times \varphi_{ij})^{-1} (D) \big)$ is a closed set in $P$. Note that $Q=\underset{i,j \in I, i \leq j}{\cap} {P_{ij}}$, and thus is a closed subset of $P$. Thus if all $Q_i$ are Hausdorff and compact, where $i \in I$, then $Q$ is Hausdorff and compact.
\end{proof}

\begin{theorem}\label{thm:existence_of_projective_limits}
Let $(Q_i, \varphi_{ij}, I)$ be a projective system of compact Hausdorff quandles. Then the projective limit $Q= \underset{i \in I}{\varprojlim}~ Q_i$ exists. Moreover, if the connecting homomorphism $\varphi_{ij}: Q_j \to Q_i$ are onto, then each compatible homomorphism $\varphi_i: Q \to Q_i$ is onto. 
\end{theorem}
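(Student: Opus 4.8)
The plan is to split the statement into an existence part and a surjectivity part, handling them in that order.

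\textbf{Existence.} Form the product $P = \prod_{i\in I} Q_i$ with the product topology; by Tychonoff $P$ is compact, and it is Hausdorff since each $Q_i$ is. As observed in the excerpt (the paragraph preceding Example \ref{exa:1.7}), the candidate projective limit is the subset
\[
Q = \{(q_i) \in P ~:~ \varphi_{ij}(q_j) = q_i \text{ for all } i \leq j\},
\]
which is a subquandle of $P$ once we know it is non-empty, and which then satisfies the universal property with the restricted projections $\varphi_i = \pi_i|_Q$. By Lemma \ref{lemma:projective_limit_is_closed_in_Hausdorff}, $Q$ is closed in $P$, hence compact Hausdorff — but all of this is vacuous unless $Q \neq \emptyset$. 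So the real content of the existence statement is: \emph{a projective limit of non-empty compact Hausdorff spaces is non-empty}. This is the standard fact that an inverse limit of a system of non-empty compact Hausdorff spaces over a directed index set is non-empty. I would prove it via the finite intersection property: for each pair $i \leq j$ let $P_{ij} = \{(q_k) \in P : \varphi_{ij}(q_j) = q_i\}$, which is closed in $P$ (this is exactly the closedness argument in Lemma \ref{lemma:projective_limit_is_closed_in_Hausdorff}) and non-empty (pick any point in each $Q_k$ for $k \notin\{i,j\}$ freely, pick $q_j \in Q_j$ arbitrary, set $q_i = \varphi_{ij}(q_j)$). One checks that any finite collection $P_{i_1 j_1}, \dots, P_{i_n j_n}$ has non-empty intersection: using directedness, choose an upper bound $m$ of all the indices involved, pick $q_m \in Q_m$, and define $q_k = \varphi_{km}(q_m)$ for all $k \leq m$ and arbitrarily otherwise; conditions (2.1)--(2.2) of a projective system guarantee this tuple lies in every $P_{i_\ell j_\ell}$. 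By compactness of $P$, the total intersection $\bigcap_{i \leq j} P_{ij} = Q$ is non-empty.

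\textbf{Surjectivity.} Assume now every $\varphi_{ij}$ is onto; fix $i_0 \in I$ and $a \in Q_{i_0}$; I want $(q_i) \in Q$ with $q_{i_0} = a$. The idea is to run the same finite-intersection-property argument inside the fiber. For each $j \leq k$ (with the convention that we only constrain what is needed) let
\[
C_{jk} = \{(q_i) \in P ~:~ \varphi_{i_0 \, i}(q_i) = a \text{ whenever } i_0 \leq i, \text{ and } \varphi_{jk}(q_k) = q_j\};
\]
more cleanly, set $F = \{(q_i) \in Q : q_{i_0} = a\}$ and exhibit $F$ as an intersection of closed sets with the finite intersection property inside the compact space $P$. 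Closedness of each constraint set follows as before from Hausdorffness of the $Q_i$ and continuity of the connecting maps and projections. Non-emptiness of finite intersections: given finitely many indices, pick an upper bound $m$ that also dominates $i_0$ (directedness); since $\varphi_{i_0\, m}$ is onto, choose $q_m \in Q_m$ with $\varphi_{i_0\, m}(q_m) = a$; then define $q_k = \varphi_{km}(q_m)$ for $k \leq m$, so that $\varphi_{i_0\, k}(q_k) = \varphi_{i_0\, k}\varphi_{km}(q_m) = \varphi_{i_0\, m}(q_m) = a$ for every $k$ with $i_0 \leq k \leq m$, and the compatibility conditions are met; fill in the remaining coordinates arbitrarily. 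Compactness then gives a point of $F$, i.e.\ an element of $Q$ mapping to $a$ under $\varphi_{i_0}$. Since $a$ was arbitrary, $\varphi_{i_0}$ is onto, and $i_0$ was arbitrary.

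\textbf{Main obstacle.} The only genuinely non-formal step is the non-emptiness of the inverse limit (and of the fibers), i.e.\ the finite-intersection-property argument, and within that the bookkeeping needed to check that the tuple built from an upper bound $m$ via $q_k = \varphi_{km}(q_m)$ actually lies in each prescribed constraint set — this is where properties (2.1) and (2.2) of a projective system and the directedness of $I$ are used essentially, and where one must be careful that ``filling in remaining coordinates arbitrarily'' does not violate any imposed condition (it does not, since each constraint set only restricts finitely many coordinates and those have been pinned down). Everything else — the subquandle structure on $Q$, continuity of the $\varphi_i$, the universal property, and the closedness statements — is routine given Proposition \ref{prop:topology_on_projective_limit}, Lemma \ref{lemma:projective_limit_is_closed_in_Hausdorff}, and the discussion already in the excerpt.
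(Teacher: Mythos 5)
Your proposal is correct and follows essentially the same route as the paper: realize $Q$ as a closed subset of the Tychonoff-compact product, prove non-emptiness of $Q$ and of the fibers $\varphi_{i_0}^{-1}(a)$ via the finite intersection property, using directedness to reduce each finite collection of constraints to a single upper bound $m$ and the cocycle condition (2.2) to propagate a choice at $m$ downward. The only (immaterial) difference is bookkeeping: you index the closed constraint sets by pairs $i\leq j$, whereas the paper bundles all constraints below a fixed $k$ into a single nested set $R_k$, which makes the finite intersection property slightly more immediate.
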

\begin{proof}
Let 
$$
Q=\{ (q_i)_{i \in I} \in P~:~ \varphi_{ij}(q_j)=q_i ~\textrm{for all} ~i,j \in I ~\textrm{s.t}~ i \leq j\},
$$
where $P= \prod_{i \in I} Q_i$. By Lemma \ref{lemma:projective_limit_is_closed_in_Hausdorff}, $Q$ is compact. Now we will prove that $Q$ is non-empty. For each $k \in I$ consider the subquandle $R_k$ of $P$ defined as 
$$
R_k=\{ (q_i)_{i \in I} \in P~:~ \varphi_{jk}(q_k)=q_j ~\textrm{for all} ~j \in I ~\textrm{s.t}~ j \leq k\}.
$$
Clearly, $Q= \underset{i \in I}{\cap}R_i$. For $x \in Q_j$, and for each $h \leq j$, taking $q_h=\varphi_{hj}(x)$ and arbitrary otherwise, we see that there is an element $(q_i)_{i \in I} \in R_j$ such that $q_j=x$. Thus $R_j$ is non-empty. Moreover, for $j \leq j'$, observe that $R_j \supseteq R_{j'}$, and as $I$ is a directed set, it imply that the family of sets of the from $R_j$ has the finite intersection property. Furthermore, for each $k \in I$, one can prove that $R_k$ is closed using the same arguments used in Lemma \ref{lemma:projective_limit_is_closed_in_Hausdorff} proving that $Q$ is closed in $P$. Now since $P$ is compact, the projective limit $Q=\underset{i \in I}{\varprojlim} ~Q_i$ being the intersection of closed sets is non-empty and hence exists.

Now we will prove that the homomorphisms $\varphi_i: Q \to Q_i$  are onto. Note that for each $i \in I$, $\varphi_i $ is the restriction of the projection map $\pi_i: P \to Q_i$ to $Q$. Fix $j \in I$ and $x \in Q_j$. We claim that for every finite subset $F \subset I$, the set $\underset{i \in F}{\cap} (R_i \cap \pi_j^{-1}(x))$ is non-empty. Since $I$ is a directed set, pick an upper bound, $l$, of elements of $F \cup \{ j\}$. Since for every $i \in F$, $R_l \subseteq R_i$, to prove the claim it is sufficient to prove that $R_l \cap \pi_j^{-1}(x)$ is non-empty. Since $\varphi_{jl}: Q_l \to Q_j$ is onto, there exists $y \in Q_l$ such that $\varphi_{jl}(y)=x$. By the same argument in the last paragraph explaining the non-emptiness of $R_i$ sets, we see that there exists an element $q=(q_i) \in R_l$ such that $q_l =y$ and $\varphi_{jl}(y)=q_j=x$. Thus $q \in R_l \cap \pi_j^{-1}(x)$ and this proves the claim. Because $P$ is compact, and the family $\{ R_i, \pi_j^{-1}(y)~:~ i \in I \}$ has the finite intersection property, we observe that the set $\underset{i \in I}{\cap}(R_i \cap \pi_j^{-1}(x))=Q \cap \pi_j^{-1}(x)$ is non-empty. Consequently, the map $\varphi_j:Q \to Q_j$ is onto.
\end{proof}

\begin{corollary}\label{cor:profinite_quandles_are_compact_and_Hausdorff}
Let $(Q_i, \varphi_{ij}, I)$ be a projective system of finite quandles. Then the projective limit $\underset{i \in I}{\varprojlim} ~Q_i$ exists and is a compact Hausdorff topological quandle.
\end{corollary}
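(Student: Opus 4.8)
The plan is to reduce this statement immediately to Theorem \ref{thm:existence_of_projective_limits} by endowing each finite quandle $Q_i$ with the discrete topology. First I would record the routine observations: a finite set with the discrete topology is compact and Hausdorff, and any finite quandle is automatically a topological quandle in the discrete topology. Indeed, the binary operation $*\colon Q_i \times Q_i \to Q_i$ is continuous because every map out of a discrete space is continuous, and each right multiplication $R_q$ is a bijection by axiom \textbf{Q2}, hence a homeomorphism of the discrete space $Q_i$. For the same reason every connecting homomorphism $\varphi_{ij}\colon Q_j \to Q_i$ is continuous. Therefore $(Q_i, \varphi_{ij}, I)$ is a projective system of compact Hausdorff topological quandles.

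Next I would apply Theorem \ref{thm:existence_of_projective_limits} to conclude that the projective limit $Q = \underset{i \in I}{\varprojlim}~Q_i$ exists. By Proposition \ref{prop:topology_on_projective_limit}, $Q$ carries a natural topological quandle structure as a subquandle of the product $P = \underset{i \in I}{\prod}~Q_i$ equipped with the product topology, and the structure maps $\varphi_i\colon Q \to Q_i$ are continuous. Finally, by Lemma \ref{lemma:projective_limit_is_closed_in_Hausdorff}, $Q$ is a closed subquandle of $P$; since $P$ is a product of compact Hausdorff spaces it is compact (by Tychonoff's theorem) and Hausdorff, and hence the closed subspace $Q$ is compact and Hausdorff as well.

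There is essentially no obstacle here, since all the substantive work is already contained in the results proved earlier in this section; the only things to verify are the trivial facts that discreteness makes all the relevant maps continuous and that axiom \textbf{Q2} forces the right multiplications to be homeomorphisms in the discrete topology. One could alternatively bypass Proposition \ref{prop:topology_on_projective_limit} and check by hand that the subquandle $Q \subseteq P$ satisfies the topological quandle axioms, but invoking that proposition keeps the argument short and clean.
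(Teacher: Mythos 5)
Your proposal is correct and matches the route the paper intends: the corollary is stated without a separate proof precisely because it follows by giving each finite quandle the discrete topology and invoking Theorem \ref{thm:existence_of_projective_limits}, Proposition \ref{prop:topology_on_projective_limit}, and Lemma \ref{lemma:projective_limit_is_closed_in_Hausdorff}. Your explicit verification that discreteness makes the operations and connecting maps continuous is exactly the routine check the paper leaves implicit.
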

Thus profinite quandles are compact Hausdorff and totally disconnected. In case of group and semigroups the converse holds, that is, a compact Hausdorff and totally disconnected group (semigroup) is profinite. Thus the following question is natural to ask.

\begin{question}\label{ques:converse?}
Let $Q$ be a compact Hausdorff and totally disconnected quandle. Is $Q$ a profinite quandle?
\end{question}

The above result implies that for a given quandle $Q$, the profinite completion $\hat{Q}$ exists.

\begin{proposition}
Let $Q$ be a quandle and $\hat{Q}$ its profinite completion. Then the image of $Q$ under the canonical homomorphism $\eta: Q \to \hat{Q}$ is a dense subquandle.
\end{proposition}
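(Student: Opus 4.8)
The plan is to argue directly with the concrete model of the profinite completion. By Example~\ref{exa:1.9} together with Corollary~\ref{cor:profinite_quandles_are_compact_and_Hausdorff}, one may realize
$$
\hat{Q}=\Big\{ (x_\gamma)_{\gamma} \in P ~:~ \varphi_{\alpha\beta}(x_\beta)=x_\alpha \ \textrm{whenever} \ \alpha \leq \beta \Big\}
$$
as a closed subquandle of the product $P=\prod_{\gamma \in \underset{< \infty}{\Ker(Q)}} Q/\gamma$ equipped with the subspace topology, where each finite quotient $Q/\gamma$ carries the discrete topology and $\eta(q)=([q]_\gamma)_{\gamma}$. Since $\eta$ is a quandle homomorphism, its image is closed under $*$ and $*^{-1}$, hence an algebraic subquandle of $\hat{Q}$; and since for $b \in \eta(Q)$ the homeomorphism $R_b$ of $\hat{Q}$ carries $\eta(Q)$ onto $\eta(Q)$, the image is a topological subquandle as well. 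So everything reduces to showing that $\eta(Q)$ is dense.

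First I would fix an arbitrary $x=(x_\gamma)_{\gamma} \in \hat{Q}$ together with a basic open neighborhood $U$ of $x$. By the definition of the product topology, $U$ is cut out by conditions on finitely many coordinates $\gamma_1,\dots,\gamma_n \in \underset{< \infty}{\Ker(Q)}$, and because each $Q/\gamma_i$ is discrete this means $U=\{y \in \hat{Q} ~:~ y_{\gamma_i}=x_{\gamma_i} \ \textrm{for} \ i=1,\dots,n\}$. Thus it suffices to find $q \in Q$ with $[q]_{\gamma_i}=x_{\gamma_i}$ for every $i$.

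Next I would invoke directedness of the index set. The congruence $\gamma:=\gamma_1\cap\cdots\cap\gamma_n$ has finite index, since $Q/\gamma$ embeds into $Q/\gamma_1\times\cdots\times Q/\gamma_n$, and $\gamma \subseteq \gamma_i$ means $\gamma_i \leq \gamma$ in the reverse-inclusion order, so $\gamma$ is an upper bound for $\gamma_1,\dots,\gamma_n$ lying in $\underset{< \infty}{\Ker(Q)}$. Using the canonical surjection $Q \to Q/\gamma$, choose $q \in Q$ with $[q]_\gamma=x_\gamma$. The compatibility relation satisfied by $x$ then gives, for each $i$,
$$
x_{\gamma_i}=\varphi_{\gamma_i\gamma}(x_\gamma)=\varphi_{\gamma_i\gamma}([q]_\gamma)=[q]_{\gamma_i},
$$
where $\varphi_{\gamma_i\gamma}:Q/\gamma \to Q/\gamma_i$ is the canonical projection. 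Hence $\eta(q) \in U$, and as $U$ was an arbitrary basic neighborhood of an arbitrary point of $\hat{Q}$, the image $\eta(Q)$ is dense in $\hat{Q}$.

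I do not expect a genuine obstacle here: this is the familiar fact that an element of an inverse limit of finite discrete objects is approximated in every coordinate, transported to the quandle setting. The points that call for a little care are recognizing the basic open sets of the subspace topology on $\hat{Q}$ correctly---this is exactly where discreteness of the finite quotients enters---and checking that the common refinement $\gamma$ of finitely many finite-index congruences is again of finite index, so that it genuinely belongs to $\underset{< \infty}{\Ker(Q)}$.
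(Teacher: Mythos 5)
Your argument is correct and is essentially the paper's own proof: both pass to the common refinement $\gamma=\gamma_1\cap\cdots\cap\gamma_n$ of the finitely many constrained coordinates and use surjectivity of the quotient map $Q\to Q/\gamma$ together with the compatibility relations to hit a prescribed basic open set. The only (cosmetic) difference is that you fix a point of $\hat{Q}$ and use singleton-valued basic neighborhoods, which if anything streamlines the paper's handling of general open sets $U_{\alpha}$.
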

\begin{proof}

Let $U$ be a non-empty open subset of $\hat{Q}$. Then $U=\hat{Q} \cap \prod_{\alpha \in \underset{< \infty}{\Ker(Q)}} U_{\alpha}$, where all but finitely many of the $U_{\alpha}$ are equal to $Q/ \alpha$. We will show that $U \cap \eta(Q)$ is non-empty. Let ${\alpha}_1, \ldots, {\alpha}_n \in \underset{< \infty}{\Ker(Q)}$ for which $U_{\alpha} \neq Q / \alpha$. Take $\beta = \alpha_1 \cap \cdots \cap \alpha_n \in \underset{< \infty}{\Ker(Q)}$. Let $V_{\alpha_1}, \ldots, V_{\alpha_n} \subset Q /\beta$ be the preimages of $U_{{\alpha}_i}$, where $1 \leq i \leq n$, under the onto maps $Q/ \beta \twoheadrightarrow Q / \alpha_i$. We claim that the set $V = \bigcap \limits_{ i=1} ^{n} V_{\alpha_i}$ is non-empty. Note that there is a projection map $\pi_{\beta}: U \to Q/ \beta$, thus by the definition of the projective limit $\pi_{\beta}( U) \subset V$. Thus $V$ is non-empty. For $x \in \pi_{\beta}( U )$, observe that $\eta(q_{\beta}^{-1}(x)) \in U$, where $q_{\beta}: Q \to Q /\beta$ is the quotient map. This completes the proof.
\end{proof}

\begin{proposition}\label{prop:closed_subquandle_of_a_profinite_quandle}
Let $Q$ be a profinite quandle and $P$ a closed subquandle of $Q$. Then $P$ is a profinite quandle.
\end{proposition}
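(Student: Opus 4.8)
The plan is to realize $P$ explicitly as a projective limit of finite quandles obtained by restricting a given projective system for $Q$. Fix a projective system of finite quandles $(Q_i,\varphi_{ij},I)$ with $Q=\underset{i\in I}{\varprojlim}\,Q_i$, realized concretely as the closed subquandle $\{(q_i)\in\prod_{i\in I}Q_i : \varphi_{ij}(q_j)=q_i\text{ for }i\leq j\}$ of the product, and let $\varphi_i\colon Q\to Q_i$ be the (continuous) restrictions of the projections, so that $p=(\varphi_i(p))_{i\in I}$ for every $p\in Q$ and $\varphi_{ij}\circ\varphi_j=\varphi_i$. Since $Q$ is compact by Corollary \ref{cor:profinite_quandles_are_compact_and_Hausdorff} and $P$ is closed in $Q$, the subquandle $P$ is itself compact.

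Next I would set $P_i:=\varphi_i(P)\subseteq Q_i$. Each $P_i$ is a finite quandle: it is a subquandle of $Q_i$, being the image of the subquandle $P$ under the homomorphism $\varphi_i$ (closure under $*$ and $*^{-1}$ is immediate, and for fixed $b\in P_i$ the map $x\mapsto x*b$ is an injective self-map of the finite set $P_i$, hence bijective). Using $\varphi_{ij}\circ\varphi_j=\varphi_i$ one gets $\varphi_{ij}(P_j)=\varphi_{ij}(\varphi_j(P))=\varphi_i(P)=P_i$, so the restrictions $\psi_{ij}:=\varphi_{ij}|_{P_j}\colon P_j\to P_i$ are well defined and inherit conditions (2.1)–(2.2); thus $(P_i,\psi_{ij},I)$ is a projective system of finite quandles, with projective limit $\underset{i\in I}{\varprojlim}\,P_i=\{(p_i)\in\prod_{i\in I}P_i : \psi_{ij}(p_j)=p_i\}\subseteq\prod_{i\in I}Q_i$.

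The key step is to show that, inside $\prod_{i\in I}Q_i$, the subquandle $P$ coincides with $\underset{i\in I}{\varprojlim}\,P_i$. The inclusion $P\subseteq\underset{i\in I}{\varprojlim}\,P_i$ is immediate since for $p=(p_i)\in P$ one has $p_i=\varphi_i(p)\in P_i$ and $\psi_{ij}(p_j)=\varphi_{ij}(\varphi_j(p))=\varphi_i(p)=p_i$. For the reverse inclusion, fix $(p_i)\in\underset{i\in I}{\varprojlim}\,P_i$ and consider the sets $C_i:=P\cap\varphi_i^{-1}(p_i)$ for $i\in I$. Each $C_i$ is closed in $Q$ (as $\varphi_i$ is continuous, $Q_i$ discrete, and $P$ closed) and non-empty (as $p_i\in\varphi_i(P)$). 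This family has the finite intersection property: given a finite $F\subseteq I$, pick $k\in I$ with $k\geq i$ for all $i\in F$ by directedness, and observe that any $p\in C_k$ satisfies $\varphi_i(p)=\varphi_{ik}(\varphi_k(p))=\varphi_{ik}(p_k)=p_i$ for all $i\in F$, so $C_k\subseteq\bigcap_{i\in F}C_i$. By compactness of $Q$, $\bigcap_{i\in I}C_i\neq\varnothing$, and any $p$ in this intersection lies in $P$ with $\varphi_i(p)=p_i$ for all $i$, i.e.\ $p=(p_i)$; hence $\underset{i\in I}{\varprojlim}\,P_i\subseteq P$.

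Since the subspace topology on $P\subseteq\prod_{i\in I}Q_i$ is precisely the projective limit topology on $\underset{i\in I}{\varprojlim}\,P_i$, the identification $P=\underset{i\in I}{\varprojlim}\,P_i$ is one of topological quandles (compatibly with Proposition \ref{prop:topology_on_projective_limit}), and therefore $P$ is profinite. The one step that needs care is the reverse inclusion $\underset{i\in I}{\varprojlim}\,P_i\subseteq P$: a compatible tuple from the $P_i$ automatically lifts to a point of $Q$, and the content is that it in fact lies in $P$. This is exactly the analogue for closed subquandles of the non-emptiness argument in Theorem \ref{thm:existence_of_projective_limits}, and it is the place where the closedness of $P$—hence its compactness inside the compact space $Q$—is essential.
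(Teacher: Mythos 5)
Your proof is correct and follows essentially the same route as the paper: both form the projective system $(P_i,\varphi_{ij}|_{P_j},I)$ with $P_i=\varphi_i(P)$ and identify $P$ with $\underset{i\in I}{\varprojlim}\,P_i$. The only difference is that where the paper invokes a cited result (Wilson, Proposition 1.1.6(c)) to get that $P$ is dense in $\underset{i\in I}{\varprojlim}\,P_i$ and then uses closedness, you prove the reverse inclusion directly via the finite-intersection-property/compactness argument, which is just that density-plus-closedness step made explicit.
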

\begin{proof}
Suppose $(Q_i, \varphi_{ij}, I)$ is a projective system of finite quandles whose projective limit is $Q$, where $\varphi_i:Q \to Q_i$ are the compatible homomorphisms. For each $i \in I$, denote $P_i= \varphi_i(P)$, a finite subquandle of $Q_i$. It is easy to see that $(P_i, {\varphi_{ij}}_{|P_j}, I)$ is a projective system and the maps ${\varphi_i}_{|P}: P \to P_i$ are homomorphisms compatible with it. By \cite[Proposition 1.1.6 (c)]{MR1691054}, $P$ is dense in $\underset{i \in I}{\varprojlim}~P_i$. Given that $P$ is closed in $\underset{i \in I}{\varprojlim} ~P_i$, it implies $P=\underset{i \in I}{\varprojlim} ~P_i$.
\end{proof}

A Hausdorff quandle $Q$ is said to be {\it topologically residually finite} if for any distinct elements $a, b \in Q$, there exists a finite quandle $F$ with the discrete topology and a continuous homomorphism $f: Q \to F$ such that $f(a) \neq f(b)$.

\begin{theorem}\label{thm:topological_charactersization_of_profinite_quandles}
The following are equivalent for a compact Hausdorff quandle.
\begin{enumerate}
\item $Q$ is profinite.
\item $Q$ is topologically residually finite.
\item $Q$ is closed subquandle of a direct product of finite quandles.
\end{enumerate}
\end{theorem}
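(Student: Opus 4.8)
The plan is to prove the cycle of implications $(1) \Rightarrow (3) \Rightarrow (2) \Rightarrow (1)$, using the machinery already developed. The implication $(1) \Rightarrow (3)$ is essentially a repackaging of the construction of projective limits: if $Q$ is profinite, say $Q = \varprojlim_{i \in I} Q_i$ for a projective system of finite quandles, then by Corollary \ref{cor:profinite_quandles_are_compact_and_Hausdorff} and Lemma \ref{lemma:projective_limit_is_closed_in_Hausdorff}, $Q$ is (isomorphic to) a closed subquandle of $P = \prod_{i \in I} Q_i$, which is precisely a direct product of finite quandles. So $(1) \Rightarrow (3)$ requires only citing these results.

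For $(3) \Rightarrow (2)$, suppose $Q$ is a closed subquandle of a direct product $P = \prod_{i \in I} F_i$ of finite quandles (each with discrete topology, $P$ with the product topology). Given distinct $a, b \in Q$, they differ in some coordinate $i$, so the projection $\pi_i : P \to F_i$, restricted to $Q$, is a continuous homomorphism to the finite discrete quandle $F_i$ separating $a$ and $b$. Since $Q$ is Hausdorff (a subspace of the Hausdorff space $P$), this shows $Q$ is topologically residually finite.

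The implication $(2) \Rightarrow (1)$ is the substantive one, and I expect it to be the main obstacle. The idea is to build $Q$ as the projective limit of its finite continuous quotients. Concretely, let $\mathcal{N}$ be the set of open congruences on $Q$; I would first check that $\mathcal{N}$ is directed under reverse inclusion (the intersection of two open congruences is an open congruence, since a finite intersection of open sets is open, and the corresponding quotient embeds in the product of the two finite quotients, hence is finite). For each $\alpha \in \mathcal{N}$, the quotient $Q/\alpha$ is finite discrete, and the quotient maps furnish a projective system $(Q/\alpha, \varphi_{\alpha\beta}, \mathcal{N})$ with a compatible family $q_\alpha : Q \to Q/\alpha$, inducing a continuous homomorphism $\theta : Q \to \varprojlim_{\alpha} Q/\alpha$. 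I must show $\theta$ is a topological quandle isomorphism. Injectivity follows from topological residual finiteness: if $a \neq b$, a continuous homomorphism $f : Q \to F$ to a finite discrete quandle with $f(a) \neq f(b)$ has kernel an open congruence (its classes are preimages of points, which are open since $F$ is discrete and $f$ continuous — one should note that the relation $\ker(f) = (f \times f)^{-1}(\Delta_F)$ is open in $Q \times Q$), separating $a$ and $b$ in the limit. For surjectivity and the homeomorphism property, I would invoke compactness: $\theta$ is a continuous bijection from the compact space $Q$ onto its image, which is dense in the (Hausdorff) projective limit by \cite[Proposition 1.1.6(c)]{MR1691054} or by a direct density argument as in the preceding proposition, and a compact subset of a Hausdorff space is closed, so the image is all of $\varprojlim_\alpha Q/\alpha$; then $\theta$ is a continuous bijection from a compact space to a Hausdorff space, hence a homeomorphism. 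The delicate point is verifying that the image of $Q$ is dense in the limit — this needs the compatible family to be ``surjective enough,'' i.e. that for each $\alpha$ the map $q_\alpha : Q \to Q/\alpha$ is onto, which holds since $Q/\alpha$ is by definition a quotient. With density and compactness in hand, the conclusion that $\theta$ is an isomorphism of topological quandles follows, establishing that $Q$ is profinite.
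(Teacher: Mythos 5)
Your proposal is correct, but it travels around the cycle in the opposite direction from the paper and concentrates the work in a different place. The paper proves $(1)\Rightarrow(2)\Rightarrow(3)\Rightarrow(1)$: its $(2)\Rightarrow(3)$ step assembles the separating maps $\phi_{p,q}:Q\to F_{p,q}$ into a single continuous injection $\phi:Q\to\prod_{p\neq q}F_{p,q}$ and uses compactness of $Q$ to conclude the image is closed, and its $(3)\Rightarrow(1)$ step observes that a product $\prod_{i\in I}F_i$ is itself the projective limit of its finite sub-products indexed by finite subsets of $I$, then invokes Proposition \ref{prop:closed_subquandle_of_a_profinite_quandle} (a closed subquandle of a profinite quandle is profinite). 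You instead prove $(1)\Rightarrow(3)\Rightarrow(2)\Rightarrow(1)$, with the substance in $(2)\Rightarrow(1)$: you realize $Q$ directly as the projective limit of its finite quotients by open congruences, getting injectivity of $\theta:Q\to\varprojlim_\alpha Q/\alpha$ from topological residual finiteness and surjectivity from compactness plus density. Your route buys a more intrinsic description of the inverse system (quotients of $Q$ rather than an ambient product) and avoids Proposition \ref{prop:closed_subquandle_of_a_profinite_quandle}; the paper's route outsources the hard compactness bookkeeping to that proposition and to the already-proved structure of products. One point to tighten: in your density step, surjectivity of each $q_\alpha$ alone does not give density of $\theta(Q)$ in the limit --- you also need directedness of the set of open congruences (so that the closed sets $\theta(Q)\cap\pi_\alpha^{-1}(x_\alpha)$ have the finite intersection property) together with compactness; this is exactly what the cited result \cite[Proposition 1.1.6(c)]{MR1691054} packages, and the paper uses the same citation for the analogous purpose, so it is a matter of phrasing rather than a gap.
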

\begin{proof}
For {\it (1)} $\implies$ {\it (2)}. Let $Q$ be a profinite quandle. Then there exists a projective system $(Q_i, \varphi_{ij}, I)$ of finite quandles with projective limit $Q$. Now for $(q_i) \neq (q'_i) \in Q$, there exists $j \in I$ such that $q_j \neq q_j'$. Thus $\varphi_j( (q_i) ) \neq \varphi_j( (q'_i) )$, where $\varphi_j : Q \to Q_j$ is the continuous compatible homomorphism. Hence $Q$ is residually finite as topological quandle.

For {\it (2)} $\implies$ {\it (3)}. Let $Q$ be a compact topologically residually finite quandle. For $p,q \in Q$ with $p \neq q$, there exists a finite quandle $F_{p,q}$ and a continuous homomorphism $\phi_{p,q}: Q \to F_{p,q}$ such that $\phi_{p,q}(p) \neq \phi_{p,q}(q)$. Now consider the quandle
$$
F = \underset{(p,q) \in Q \times Q, p \neq q}{\prod} F_{p,q},
$$
where $F$ is a compact Hausdorff quandle under the product topology. Define a homomorphism $\phi: Q \to F$ as
$$
\phi = \underset{(p,q) \in Q \times Q, p \neq q}{\prod} \phi_{p,q}
$$
which is an injective continuous map into a Hausdorff space. Thus $Q$ is isomorphic to a closed subquandle of direct product of finite quandles.

For {\it 3} $\implies$ {\it 1}. Let $P=\underset{i \in I}\prod F_i$ be a product quandle, where $I$ is an indexing set, and for each $i \in I$, $F_i$ is a finite quandle. Let $\mathcal{I}$ be the collection of finite subsets of $I$. Then $\mathcal{I}$ is a directed set under the inclusion ordering. For all $J_1, J_2 \in \mathcal{I}$, such that $J_1 \leq J_2$, consider the projection maps $\pi_{J_1J_2}: \underset{i \in J_2}{\prod}F_i \to \underset{i \in J_1}{\prod} F_i$. Then $P$ is the projective limit of $(\underset{i \in J}{\prod} F_i, \pi_{JK}, \mathcal{I})$, where $J \leq K \in \mathcal{I}$. Now the proof follows from Proposition \ref{prop:closed_subquandle_of_a_profinite_quandle}.
\end{proof}

\section{Endomorphism monoid of a profinite quandle}\label{sec:endo_of_profinite_monoid}
In this section, with $Q$ being a topological quandle, $\Aut(Q)$ and $\End(Q)$ refer to the sets of continuous automorphisms and continuous endomorphisms of $Q$, respectively.

For a compact Hausdorff quandle $Q$, there exists a unique uniform structure on $Q$ compatible with its topology, namely, the collection of all neighborhoods of the diagonal of $Q \times Q$. The collection of open equivalences form a fundamental system of entourages of the induced uniformity. As a result of the property that each continuous function from a compact Hausdorff space to a uniform space is uniformly continuous, the right multiplication on $Q$ is uniformly continuous and all continuous endomorphisms are uniformly continuous.

Given a profinite quandle $Q=\underset{i \in I}{\varprojlim}~Q_i$, where for each $i \in I$, $Q_i$ is a finite quandle, the induced topology on $Q$ is the coarsest collection of open sets that preserves the continuity of the compatible homomorphisms $\varphi_j: Q \to Q_j$ for all $j \in I$. Consequently, the collection of open congruences on $Q$ serves as a fundamental system of entourages for the induced uniformity on $Q$.

The following result provides a sufficient condition for a profinite quandle to have a fundamental system of entourages of open fully invariant congruences.

\begin{lemma}\label{lem:fundamental_system_of_open_fully_invariant_congruences}
Let $Q$ be a profinite quandle. If $Q$ has only a finite number of open congruences of index $n$ for each $n \geq 1$, then $Q$ has a fundamental system of open fully invariant congruences.
\end{lemma}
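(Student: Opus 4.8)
The plan is to run the argument of Lemma~\ref{lemma:existence-of-fully-invariant-finite-index-congruence} inside the topological category, keeping track of openness at every step. Recall that, as noted just before the statement, for a profinite quandle $Q$ the open congruences form a fundamental system of entourages of the induced uniformity; hence it suffices to produce, for each open congruence $\gamma$, an open fully invariant congruence $\bar\gamma$ with $\bar\gamma\subseteq\gamma$.

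First I would fix an open congruence $\gamma$, say of index $n$, and let $\mathcal{A}_n$ denote the set of all open congruences on $Q$ of index at most $n$. By hypothesis $\mathcal{A}_n$ is finite, so I can set $\bar\gamma=\bigcap_{\alpha\in\mathcal{A}_n}\alpha$; this is a congruence of finite index, and since it is a finite intersection of open congruences (whose classes are clopen), it is again an open congruence, with $\bar\gamma\subseteq\gamma$ because $\gamma\in\mathcal{A}_n$.

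Next I would verify full invariance. Given a continuous $f\in\End(Q)$ and $\beta\in\mathcal{A}_n$, the composite $\pi_\beta\circ f\colon Q\to Q/\beta$ is a continuous homomorphism into a finite discrete quandle, so $(f\times f)^{-1}(\beta)=\ker(\pi_\beta\circ f)$ is an open congruence, and by the first isomorphism theorem $Q/\ker(\pi_\beta\circ f)$ embeds into $Q/\beta$, hence has index at most $n$. Thus $(f\times f)^{-1}(\beta)\in\mathcal{A}_n$, so $\bar\gamma\subseteq(f\times f)^{-1}(\beta)$, i.e. $(f\times f)(\bar\gamma)\subseteq\beta$; intersecting over all $\beta\in\mathcal{A}_n$ gives $(f\times f)(\bar\gamma)\subseteq\bar\gamma$. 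Finally, since every open $\gamma$ contains $\bar\gamma$, the family $\{\bar\gamma\}$ is cofinal among open congruences and therefore itself a fundamental system of entourages, now consisting of open fully invariant congruences.

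I do not expect a serious obstacle here: the two points needing a little care — which are exactly where continuity enters — are that a finite intersection of open congruences is again an open congruence (so that $\bar\gamma$ is open, not merely of finite index), and that the pullback of an open index-$\le n$ congruence along a continuous endomorphism stays open and of index $\le n$ (so that it remains in the \emph{finite} set $\mathcal{A}_n$). These replace the purely combinatorial bookkeeping of Lemma~\ref{lemma:existence-of-fully-invariant-finite-index-congruence}.
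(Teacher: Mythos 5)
Your proposal is correct and follows essentially the same route as the paper: intersect the finitely many open congruences of index at most $n$ to get an open congruence $\bar\gamma\subseteq\gamma$, and verify full invariance by observing that $(f\times f)^{-1}(\beta)=\ker(\pi_\beta\circ f)$ is again an open congruence of index at most $n$ and hence lies in $\mathcal{A}_n$. Your explicit remark that continuity of $f$ is what keeps the pullback \emph{open} (so that it stays in the finite set $\mathcal{A}_n$) is a point the paper's proof leaves implicit, but the argument is otherwise identical.
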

\begin{proof}
Let $\mathcal{A}_n$ be the collection of all open congruences on $Q$ of index at most $n$. Given that $\mathcal{A}_n$ is a finite set, the congruence, $\alpha_n= \underset{\alpha \in \mathcal{A}_n}\cap \alpha$, is open and is an entourage. According to the definition of uniform structure, the set $S=\{\alpha_n~:~n \geq 1\}$ is a fundamental system of entourages for the uniformity on $Q$. We will prove that every element of $S$ is fully invariant. Let $f:Q \to Q$ be a continuous homomorphism and $\beta \in \mathcal{A}_n$. Consider the quotient map $q_{\beta}: Q \to Q/\beta$. Note that $(f \times f)^{-1}(\beta)=\ker (q_{\beta} \circ f)$ and is therefore of index at most $n$. If $\alpha_n \in S$ and $(a,b) \in \alpha_n$, then $(a,b) \in \alpha$ for all $\alpha \in \mathcal{A}_n$, implying $(a,b) \in (f \times f)^{-1}(\alpha)$, which means $(f(a), f(b)) \in \alpha$ for all $\alpha \in \mathcal{A}_n$. Thus, $(f(a), f(b)) \in \alpha_n$. Thus $S$ is a fundamental system of open fully invariant congruences.
\end{proof}

A topological quandle $Q$ is said to be {\it topologically generated} by a subset $X$ if the algebraic subquandle $\bar{X}$ generated by $X$, which is the intersection of all subquandles of $Q$ containing $X$, is dense in $Q$. We say $Q$ is {\it topologically finitely generated} by $X$ if $X$ is a finite set.

\begin{corollary}\label{cor:topologically_finitely_generated_profinite_quandle_has_fundamental_system}
If $Q$ is topologically finitely generated profinite quandle, then it has a fundamental system of open fully invariant congruences.
\end{corollary}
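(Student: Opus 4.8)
The plan is to deduce this from Lemma~\ref{lem:fundamental_system_of_open_fully_invariant_congruences}: it suffices to show that a topologically finitely generated profinite quandle $Q$ has only a finite number of open congruences of index $n$ for each $n \geq 1$, and the lemma then supplies the fundamental system of open fully invariant congruences.

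First I would translate the counting problem into one about homomorphisms. As recalled in Section~\ref{sec:topological_characterization_of_profinite_quandles}, a congruence $\alpha$ on $Q$ is open precisely when $Q/\alpha$ is a finite quandle with the discrete topology, and in that case $\alpha = \ker(q_\alpha)$ for the continuous quotient map $q_\alpha\colon Q \to Q/\alpha$. Thus an open congruence of index $n$ is the kernel of a continuous surjection of $Q$ onto a finite discrete quandle of order $n$. Since, up to isomorphism, there are only finitely many quandles of order $n$ — each being encoded by an integral $n \times n$ matrix, see \cite{MR2175299} and the proof of the Corollary in Section~\ref{sec:endo_of_residually_finite_quandle} — it is enough to fix a finite quandle $F$ and bound the number of continuous homomorphisms $Q \to F$.

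The key step is to use topological finite generation. Let $X$ be a finite topologically generating subset of $Q$, so that the algebraic subquandle $\bar{X}$ generated by $X$ is dense in $Q$. I claim that every continuous homomorphism $f\colon Q \to F$ is completely determined by its restriction $f|_X$. Indeed, $\bar{X}$ consists of the elements obtained from $X$ by finitely many applications of $*$ and $*^{-1}$, so the value of the homomorphism $f$ on any element of $\bar{X}$ is forced by the values $f|_X$; hence $f|_{\bar{X}}$ is determined by $f|_X$. Since $F$ is finite and discrete, it is Hausdorff, so two continuous maps $Q \to F$ that agree on the dense set $\bar{X}$ agree on all of $Q$. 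Therefore the assignment $f \mapsto f|_X$ is injective on the set of continuous homomorphisms $Q \to F$, and as $X$ and $F$ are both finite there are at most $|F|^{|X|}$ such homomorphisms.

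Combining these observations, $Q$ admits only finitely many open congruences of index $n$ for every $n \geq 1$, and Lemma~\ref{lem:fundamental_system_of_open_fully_invariant_congruences} then gives that $Q$ has a fundamental system of open fully invariant congruences. The only points that need a little care — both routine — are the explicit description of $\bar{X}$ as the set of elements generated from $X$ under $*$ and $*^{-1}$, so that a homomorphism out of $\bar{X}$ is genuinely pinned down by its values on $X$, and the density argument upgrading agreement on $\bar{X}$ to agreement on all of $Q$; neither is a real obstacle, so the corollary is essentially a formal consequence of the lemma together with finite generation.
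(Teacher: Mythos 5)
Your proposal is correct and follows essentially the same route as the paper: reduce to Lemma~\ref{lem:fundamental_system_of_open_fully_invariant_congruences} by counting continuous homomorphisms onto finite discrete quandles, using that there are finitely many quandles of order $n$ and that a continuous homomorphism into a Hausdorff target is determined by its restriction to the dense finitely generated subquandle $\bar{X}$, hence by its values on $X$. The paper's version merely cites Bourbaki for the density step and leaves the appeal to the lemma implicit; your write-up fills in exactly those details.
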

\begin{proof}
Suppose $Q$ is topologically generated by a finite set $X$, and $\bar{X}$ denotes the subquandle of $Q$ algebraically generated by $X$. For a given finite quandle $F$ with the discrete topology, if $f, g: Q \to F$ are two continuous homomorphisms, and $f=g$ on $\bar{X}$, then $f=g$ on $Q$ (see \cite[Corollary 1, Page 76]{MR1726779}). Because $X$ and $F$ are finite sets, the number of homomorphisms from $\bar{X}$ to $F$ is finite, which in turn implies that there are only finitely many continuous homomorphism from $Q$ to $F$. Due to the finite number of quandles of order $n$, we conclude that there are only finitely many open congruences of order $n$.
\end{proof}

Given a profinite quandle $Q$ and an open fully invariant congruence $\alpha$ on $Q$, if $\phi: Q \to Q$ is a continuous endomorphism, then it induces an endomorphsim $\phi_{\alpha}: Q /\alpha \to Q/ \alpha$ defined as $\phi_{\alpha}([p]_{\alpha})=[\phi(p)]_{\alpha}.$

\begin{proposition}
Let $Q$ be a profinite quandle. If $Q$ has a fundamental system of open fully invariant congruences, then $Q$ is Hopfian.
\end{proposition}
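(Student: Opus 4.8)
The plan is to mimic the proof of Proposition \ref{prop:Hopfian_residually_finite_quandle}, replacing "finite index congruence" by "open congruence" and using the compactness of $Q$ to upgrade a continuous bijection to a topological isomorphism. Recall that since $Q$ is profinite it is compact, Hausdorff and totally disconnected (Corollary \ref{cor:profinite_quandles_are_compact_and_Hausdorff}), and the open congruences form a fundamental system of entourages for the unique uniformity on $Q$. In particular, since $Q$ is Hausdorff, the intersection of all open congruences is the diagonal, so the open congruences separate points; and by hypothesis this separation can already be achieved using only open fully invariant congruences.

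First I would take a continuous epimorphism $\phi\colon Q\to Q$ and argue that it must be injective. Suppose not, and pick $p\neq q$ in $Q$ with $\phi(p)=\phi(q)$. By the remark above, there is an open fully invariant congruence $\alpha$ with $(p,q)\notin\alpha$. Because $\alpha$ is fully invariant and $\phi$ is a continuous endomorphism, $\phi$ descends to an endomorphism $\phi_{\alpha}\colon Q/\alpha\to Q/\alpha$, $\phi_{\alpha}([x]_{\alpha})=[\phi(x)]_{\alpha}$, as recorded just before the statement; and since $\phi$ is onto, so is $\phi_{\alpha}$.

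Next, since $\alpha$ is open, $Q/\alpha$ is a finite quandle, so the surjection $\phi_{\alpha}$ of a finite set onto itself is a bijection. But $\phi_{\alpha}([p]_{\alpha})=[\phi(p)]_{\alpha}=[\phi(q)]_{\alpha}=\phi_{\alpha}([q]_{\alpha})$ while $[p]_{\alpha}\neq[q]_{\alpha}$ (as $(p,q)\notin\alpha$), contradicting the injectivity of $\phi_{\alpha}$. Hence $\phi$ is injective, and therefore a continuous bijective quandle homomorphism. Finally, a continuous bijection from a compact space to a Hausdorff space is a homeomorphism, so $\phi^{-1}$ is continuous and $\phi$ is a continuous automorphism of $Q$; thus $Q$ is Hopfian.

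I do not expect a genuine obstacle here. The only two points requiring care are: (i) checking that the hypothesis really supplies, for each pair of distinct points, an open fully invariant congruence separating them — this is exactly what "fundamental system of entourages" combined with the Hausdorff property gives, so the algebraic argument of Proposition \ref{prop:Hopfian_residually_finite_quandle} goes through verbatim once this is in place; and (ii) invoking compactness at the end to get continuity of the inverse, which is needed because in this section $\Aut(Q)$ denotes the group of continuous automorphisms.
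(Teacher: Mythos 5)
Your proof is correct and follows essentially the same route as the paper's: separate $p$ and $q$ by an open fully invariant congruence (the paper first picks an open congruence and then shrinks it to a fully invariant one using the hypothesis, which amounts to the same thing), pass to the finite quotient where the induced surjection must be injective, derive a contradiction, and finally use compactness plus the Hausdorff property to upgrade the continuous bijection to an automorphism. No issues; your version even fixes a sign typo in the paper's contradiction setup ($\phi(p)=\phi(q)$, not $\neq$).
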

\begin{proof}
Let $\phi: Q \to Q$ be a continuous epimorphism. We claim that $\phi$ is injective. To the contrary, suppose that there exist $p \neq q \in Q$ with $\phi(p) \neq \phi(q)$. Given that $Q$ is profinite, there exists an open congruence $\alpha$ on $Q$ such that $(p,q) \notin \alpha$. According to Lemma \ref{lem:fundamental_system_of_open_fully_invariant_congruences}, there exists an open fully invariant congruence $\rho \subset \alpha$. Consequently, there is an epimorphism $\phi': Q/\rho \to Q/\rho$. Since $Q/\rho$ is finite, thus $\phi'$ is an automorphism. Because $\phi(p) = \phi(q)$, it follows that $\phi'([p]_{\rho})=\phi'([q]_{\rho})$, which leads to a contradiction because $[p]_{\rho} \neq [q]_{\rho}$. Hence, $\phi$ is a bijective continuous map, and since $Q$ is compact and Hausdorff, it follows that $\phi$ is an automorphism. Thus, $Q$ is Hopfian
\end{proof}

Let $X$ and $Y$ be uniform spaces. Then a set $F$ of functions from $X$ to $Y$ is said to be {\it uniformly equicontinuous} if for any entourage $R \subset Y \times Y$, $\cap_{f \in F} (f \times f)^{-1}(R)$ is an entourage for $X$.

\begin{theorem}[Ascoli] \label{Ascoli}
Let $X$ and $Y$ be compact Hausdorff spaces equipped with their uniform structures and let $C(X,Y)$ the space of continuous maps from $X$ to $Y$ equipped with the compact-open topology. Then a set $F \subset C(X,Y)$ is compact if and only if $F$ is closed and uniformly equicontinuous.
\end{theorem}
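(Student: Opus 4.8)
The statement is the Arzel\`a--Ascoli theorem in the uniform-space setting, and I would prove it along the classical lines, the argument being streamlined by the fact that $Y$ is compact (so the usual pointwise-relative-compactness hypothesis is automatic and $Y^X$ is compact by Tychonoff). First I would record two standard facts, both quotable from Bourbaki or Kelley: since $X$ is compact, the compact-open topology on $C(X,Y)$ coincides with the topology of uniform convergence determined by the unique uniformity on $Y$, a base at $f$ being the sets $B(f,R)=\{g:(f(x),g(x))\in R\text{ for all }x\in X\}$ with $R$ an entourage of $Y$; and, since $Y$ is Hausdorff, $C(X,Y)$ is Hausdorff in this topology.

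For ``compact $\implies$ closed and uniformly equicontinuous'': closedness is immediate from Hausdorffness. For uniform equicontinuity, fix an entourage $R$ of $Y$ and a symmetric entourage $V$ with $V\circ V\circ V\subseteq R$. Cover the compact set $F$ by finitely many basic open sets $B(f_1,V),\dots,B(f_n,V)$; each $f_i$, being continuous on a compact Hausdorff space, is uniformly continuous, so there is an entourage $U_i$ of $X$ with $(x,x')\in U_i\implies(f_i(x),f_i(x'))\in V$. Put $U=\bigcap_i U_i$. For $g\in F$ with $g\in B(f_i,V)$ and $(x,x')\in U$, chaining through $f_i(x)$ and $f_i(x')$ gives $(g(x),g(x'))\in V\circ V\circ V\subseteq R$; hence $U\subseteq\bigcap_{g\in F}(g\times g)^{-1}(R)$, so the latter is an entourage of $X$.

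For the converse, embed $C(X,Y)$ in the compact space $Y^X=\prod_{x\in X}Y$ and let $\overline{F}$ be the closure of $F$ there, i.e.\ its pointwise closure; then $\overline{F}$ is compact in the product (pointwise-convergence) topology. I would then check: (a) $\overline{F}$ is again uniformly equicontinuous --- taking $R$ closed (closed entourages form a base of any uniformity), the relation $(f(x),f(x'))\in R$ survives pointwise limits, so an entourage $U$ of $X$ witnessing equicontinuity of $F$ for $R$ witnesses it for $\overline{F}$; in particular every member of $\overline{F}$ is continuous, so $\overline{F}\subseteq C(X,Y)$; and (b) on a uniformly equicontinuous family the pointwise-convergence topology and the uniform-convergence topology agree. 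Granting (b), $\overline{F}$ is compact in $C(X,Y)$, and since $F$ is closed in $C(X,Y)$ it is a closed subset of the compact set $\overline{F}$, hence compact.

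The main obstacle is (b). I would prove it by the standard three-$V$ argument: given $f\in\overline{F}$ and an entourage $R$ of $Y$, pick $V$ with $V\circ V\circ V\subseteq R$ and an entourage $U$ of $X$ witnessing uniform equicontinuity of $\overline{F}$ for $V$; by compactness choose $x_1,\dots,x_m\in X$ whose $U$-balls cover $X$; then any $g\in\overline{F}$ with $(f(x_k),g(x_k))\in V$ for $k=1,\dots,m$ satisfies $(f(x),g(x))\in R$ for every $x\in X$, so the pointwise-basic neighbourhood of $f$ determined by $x_1,\dots,x_m$ and $V$ lies inside $B(f,R)$. The reverse containment of topologies is trivial, so the two topologies coincide on $\overline{F}$ and the proof is complete. (Alternatively the whole theorem may simply be cited from a standard reference; the above is the self-contained route.)
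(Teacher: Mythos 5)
Your proof is correct: the paper states this result without proof, treating it as the classical Arzel\`a--Ascoli theorem for uniform spaces (quotable from Bourbaki or Kelley), so there is no in-paper argument to compare against. Your self-contained version is the standard one --- identification of the compact-open topology with the topology of uniform convergence, the three-entourage chaining for the forward direction, and pointwise closure in $Y^X$ plus the coincidence of the pointwise and uniform topologies on an equicontinuous family for the converse --- and all the steps you flag as needing verification (closed entourages forming a base, preservation of equicontinuity under pointwise limits, $F$ being closed in the compact set $\overline{F}$) go through exactly as you describe.
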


\begin{proposition}\label{prop:closed_subset}
Let $P$ be a compact, Hausdorff quandle and $Q$ a Hausdorff quandle. Then $\Hom(P,Q)$ $($respectively, $\Aut(P)$$)$ is closed in compact-open topology.
\end{proposition}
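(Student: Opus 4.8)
The plan is to realize each of the two sets as an intersection of equalizers of pairs of continuous maps into Hausdorff spaces, which makes closedness automatic. For $\Hom(P,Q)$, note first that a continuous $f\colon P\to Q$ is a quandle homomorphism as soon as $f(a*b)=f(a)*f(b)$ for all $a,b\in P$: applying $f$ to $a=(a*^{-1}b)*b$ gives $f(a)=f(a*^{-1}b)*f(b)$, so compatibility with $*^{-1}$ is forced. Hence
$$
\Hom(P,Q)=\bigcap_{(a,b)\in P\times P}\bigl\{\,f\in C(P,Q)\;:\;f(a*b)=f(a)*f(b)\,\bigr\}.
$$
For a fixed pair $(a,b)$ the two maps $C(P,Q)\to Q$ sending $f$ to $f(a*b)$ and to $f(a)*f(b)$ are continuous: the first is evaluation at the point $a*b$, which is continuous for the compact-open topology; the second is $f\mapsto(f(a),f(b))\in Q\times Q$ followed by the continuous operation $*\colon Q\times Q\to Q$. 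Since $Q$ is Hausdorff, the equalizer of two continuous maps into $Q$ is closed, so each member of the intersection is closed and therefore so is $\Hom(P,Q)$.

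For $\Aut(P)$, recall that since $P$ is compact Hausdorff, $C(P,P)$ is Hausdorff, a continuous bijection of $P$ is automatically a homeomorphism, and composition $C(P,P)\times C(P,P)\to C(P,P)$, $(f,g)\mapsto f\circ g$, is jointly continuous. I would then identify $\Aut(P)$, via $f\mapsto(f,f^{-1})$, with the set
$$
G=\bigl\{\,(f,g)\in\Hom(P,P)\times\Hom(P,P)\;:\;f\circ g=\id_P=g\circ f\,\bigr\}.
$$
Here $\Hom(P,P)\times\Hom(P,P)$ is closed in $C(P,P)\times C(P,P)$ by the first paragraph, and each of $f\circ g=\id_P$ and $g\circ f=\id_P$ defines a closed set, being the equalizer of a continuous composition map with the constant map $\id_P$ into the Hausdorff space $C(P,P)$; hence $G$ is closed. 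Under this description $\Aut(P)$ carries exactly the topology (a subspace of $C(P,P)\times C(P,P)$) that makes it a topological group.

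The step I expect to cause trouble is controlling the inverses, i.e.\ showing that a limit of automorphisms remains bijective. Surjectivity is harmless: the set of onto self-maps of $P$ is already closed in $C(P,P)$, since for a net $f_\lambda\to f$ of onto maps and a point $x\in P$ one chooses $y_\lambda$ with $f_\lambda(y_\lambda)=x$, passes to a subnet with $y_\lambda\to y$ using compactness of $P$, and invokes joint continuity of evaluation $C(P,P)\times P\to P$ to conclude $f(y)=x$. Injectivity, however, is not a closed condition in $C(P,P)$ in general, which is why the argument is routed through $G$, where the inverse is carried in the second coordinate and the relation $f\circ f^{-1}=\id_P$ passes to limits. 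If closedness is wanted inside $C(P,P)$ itself, the natural extra hypothesis is uniform equicontinuity of $\Aut(P)$: then by Theorem~\ref{Ascoli} the family of inverses has compact closure, one extracts a convergent subnet of inverses, and passing to the limit in $f_\mu\circ f_\mu^{-1}=\id_P$ again produces an automorphism.
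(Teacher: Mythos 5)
Your argument for $\Hom(P,Q)$ is correct and is essentially the paper's proof in different packaging: the paper fixes a non-homomorphism $f$, picks $p,q$ with $f(p*q)\neq f(p)*f(q)$, and builds the basic compact-open neighborhood $V(\{p\},W)\cap V(\{q\},W')\cap V(\{p*q\},U)$ missing $\Hom(P,Q)$, while you write $\Hom(P,Q)$ as an intersection of equalizers of the continuous maps $f\mapsto f(a*b)$ and $f\mapsto f(a)*f(b)$. Both arguments use exactly the same ingredients (continuity of evaluation, continuity of $*$, Hausdorffness of $Q$), and your preliminary observation that preservation of $*$ forces preservation of $*^{-1}$ is correct.

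For $\Aut(P)$ you do not prove the stated claim, and your suspicion about injectivity is justified: the statement is false as written. Take $P$ to be the Cantor set $\{0,1\}^{\mathbb{N}}$ with the trivial quandle operation, so that every continuous self-map is an endomorphism and $\Aut(P)$ is the full homeomorphism group. The shift $\sigma(x)_i=x_{i+1}$ is a non-injective continuous surjection, and it is the uniform (hence compact-open) limit of the coordinate permutations induced by the cycles $(1\,2\,\cdots\,n)$, each of which is a homeomorphism; so $\Aut(P)$ is not closed in $C(P,P)$, even though $P$ is a profinite quandle. The paper's one-line reduction (``replace $\Hom(P,Q)$ by $\Aut(P)$'') does not survive this, because a map outside $\Aut(P)$ may still be a homomorphism, and then no pair $(p,q)$ witnessing failure of multiplicativity exists and the separating neighborhood cannot be constructed. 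Your set $G$ of mutually inverse pairs is indeed closed in $\Hom(P,P)\times\Hom(P,P)$, but that only establishes closedness of $\Aut(P)$ for the finer topology in which $f\mapsto f^{-1}$ is forced to be continuous, not closedness in $C(P,P)$, so it does not rescue Proposition \ref{prop:closed_subset} either. Your proposed repair is the right one: if $\Aut(P)$ lies in a uniformly equicontinuous family, then by Theorem \ref{Ascoli} the inverses have a convergent subnet, and passing to the limit in $f_\mu\circ f_\mu^{-1}=\id_P=f_\mu^{-1}\circ f_\mu$ (using joint continuity of composition on $C(P,P)$ for $P$ compact Hausdorff) shows the limit of a net of automorphisms is again an automorphism. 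This hypothesis is available in both places the proposition is invoked (Theorems \ref{thm:when_does_end_and_aut_are_profinite} and \ref{thm:compactness_of_end_and_aut_in_profinite_quandle}, where $\Aut(Q)$ sits inside a compact, hence uniformly equicontinuous, family), so the $\Aut$ half of the proposition should either carry an equicontinuity hypothesis or be proved in situ there.
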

\begin{proof}
This is the proof for $\Hom(P,Q)$. The proof for $\Aut(Q)$ can be obtained analogously by replacing $\Hom(P,Q)$ with $\Aut(P)$.

Consider a function $f \in C(P,Q) \setminus \Hom(P,Q)$. Then, for some $p,q \in P$, it holds that $f(p * q) \neq f(p) * f(q)$. Since $Q$ is Hausdorff, there exist disjoint open neighborhoods $U$ and $V$ of $f(p*q)$ and $f(p) * f(q)$, respectively. Because the binary operation in $Q$ is continuous, there exist open neighborhoods $W, W'$ of $f(p)$ and $f(q)$ such that $W*W' \subset V$. Let $\Omega\subset C(P,Q)$ be the set of all continuous maps $\phi: P \to Q$ satisfying $\phi(p*q) \in U$, $\phi(p) \in W$ and $\phi(q) \in W'$. Observe that

$$
\Omega = V(\{p\}, W) \cap V(\{q\}, W') \cap V(\{p*q\}, U),
$$
where $V(A,B)$ denotes the set of all continuous functions $f: P \to Q$ with $f(A) \subset B$. Since $P$ is Hausdorff and compact, thus finite subsets are compact in $P$, and thus $\Omega$ is open in the compact-open topology. Clearly, $\Omega$ is disjoint from $\Hom(P,Q)$ and $f \in \Omega$. This completes the proof.
\end{proof}

For a locally compact Hausdorff space $X$, it is a well-established result that the compact-open topology on $C(X,X)$ endows it with a topological monoid structure under the composition operation.

\begin{theorem}\label{thm:when_does_end_and_aut_are_profinite}
Suppose $Q$ is a compact Hausdorff and totally disconnected topological quandle. If $\End(Q)$ $($respectively, $\Aut(Q))$ is compact in the compact-open topology, then $\End(Q)$ $($respectively, $\Aut(Q))$ is a profinite monoid $($respectively, group). Furthermore, the compact-open topology coincides with the pointwise convergence topology.
\end{theorem}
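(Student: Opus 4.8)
The plan is to deduce from the hypothesis that $\End(Q)$ (respectively $\Aut(Q)$) is a compact, Hausdorff and totally disconnected topological monoid (respectively group), and then to invoke the classical characterization of profinite monoids and groups as precisely the compact Hausdorff totally disconnected ones — the monoid version of the group/semigroup fact already recalled after Corollary \ref{cor:profinite_quandles_are_compact_and_Hausdorff}.

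The structural facts are routine and I would dispose of them first. Since $Q$ is compact Hausdorff it is locally compact Hausdorff, so by the remark preceding the theorem $C(Q,Q)$ is a topological monoid under composition in the compact-open topology; hence its submonoid $\End(Q)$ is a topological monoid, and the subgroup $\Aut(Q)$ inherits continuous multiplication. Moreover $\End(Q)$ and $\Aut(Q)$ are Hausdorff, since the compact-open topology refines the topology of pointwise convergence, which is Hausdorff because $Q$ is. (Proposition \ref{prop:closed_subset} even shows that $\End(Q)=\Hom(Q,Q)$ and $\Aut(Q)$ are closed in $C(Q,Q)$, though this is not needed once compactness is assumed.)

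The main step is Ascoli's theorem, Theorem \ref{Ascoli}, with $X=Y=Q$. By hypothesis $\End(Q)$ is a compact subset of $C(Q,Q)$, so Ascoli forces $\End(Q)$ to be uniformly equicontinuous. Because $Q$ is compact, the compact-open topology on $C(Q,Q)$ is the topology of uniform convergence; and on an equicontinuous family the topologies of uniform and of pointwise convergence coincide, which already proves the final (``Furthermore'') assertion. Working now in this common topology, total disconnectedness of $\End(Q)$ reduces to that of $Q$: for each $q\in Q$ the evaluation $e_q\colon \End(Q)\to Q$, $f\mapsto f(q)$, is continuous, and since the compact Hausdorff totally disconnected space $Q$ has a basis of clopen sets, the sets $e_q^{-1}(U)$ with $q\in Q$ and $U\subseteq Q$ clopen are clopen and form a subbasis for the topology of $\End(Q)$; hence $\End(Q)$ is a zero-dimensional Hausdorff, thus totally disconnected, space. (One can make this concrete in the spirit of \cite{MR2849853}: uniform equicontinuity gives, for each open equivalence $\alpha$ of $Q$, an open equivalence $\beta$ with $(a,b)\in\beta\Rightarrow(f(a),f(b))\in\alpha$ for all $f\in\End(Q)$, so the relation $f\sim_\alpha g\iff(f(q),g(q))\in\alpha$ for all $q$ has finite index with clopen classes, and as $\alpha$ varies these classes separate the points of $\End(Q)$.)

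Combining the above, $\End(Q)$ is a compact Hausdorff totally disconnected topological monoid, hence a profinite monoid. For $\Aut(Q)$ the same reasoning applied to the compact subset $\Aut(Q)\subseteq C(Q,Q)$ shows it is compact, Hausdorff and totally disconnected; being a compact Hausdorff group with continuous multiplication it is automatically a topological group, and a compact Hausdorff totally disconnected topological group is profinite. I expect the only delicate point to be the correct application of Ascoli's theorem together with the standard but slightly technical fact that pointwise and uniform convergence agree on equicontinuous families; after that, total disconnectedness of $\End(Q)$ is inherited from $Q$ through the evaluation maps, and profiniteness is just the known characterization of compact Hausdorff totally disconnected topological monoids and groups.
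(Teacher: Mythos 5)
Your proposal is correct and follows essentially the same strategy as the paper: show that $\End(Q)$ (respectively $\Aut(Q)$) is a compact, Hausdorff, totally disconnected topological monoid (respectively group) sitting inside $Q^Q$ with the pointwise topology, and then invoke the standard characterization of such objects as profinite. The only real difference is how the coincidence of the compact-open and pointwise-convergence topologies is obtained — the paper simply observes that the continuous inclusion of the compact space $\End(Q)$ into the Hausdorff space $Q^Q$ is automatically a homeomorphism onto its image, whereas you take the longer (but equally valid) route through Ascoli's theorem and the agreement of uniform and pointwise convergence on equicontinuous families.
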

\begin{proof}
Considering the pointwise convergence topology on $Q^Q$, the canonical inclusion map $i:\End(Q) \hookrightarrow Q^Q$ is continuous because the compact-open topology is finer than the pointwise convergence topology. Furthermore, since $\End(Q)$ is compact and $Q^Q$ is Hausdorff, the map $i$ is homeomorphism onto its image. Thus $\End(Q)$ is compact, Hausdorff and totally disconnected monoid. By \cite[Theorem 3.9.3]{MR4176669}, $\End(Q)$ is a profinite semigroup and noting the fact that if a profinite semigroup is a monoid, then it is a profinite monoid. Hence, we conclude that $\End(Q)$ is a profinite monoid.

The group $\Aut(Q)$ is compact (see Proposition \ref{prop:closed_subset}). Since it is a subspace of $\End(Q)$ it is also Hausdorff and totally disconnected. Thus it is a profinite group (see  \cite[Corollary 1.2.4]{MR1691054}). 
\end{proof}

\begin{remark}
Let $Q$ be a topological quandle. If $\End(Q)$ is profinite monoid, then $\Aut(Q)$ is a profinite group.
\end{remark}
\begin{theorem}\label{thm:compactness_of_end_and_aut_in_profinite_quandle}
Let $Q$ be a profinite quandle. Then $\End(Q)$ $($respectively, $\Aut(Q))$ is compact in the compact-open topology if and only if $Q$ admits a fundamental system of open fully invariant $($respectively, characteristics$)$ congruences.
\end{theorem}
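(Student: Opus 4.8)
The plan is to apply the Ascoli theorem (Theorem \ref{Ascoli}) with $X=Y=Q$, using that a profinite quandle carries a canonical uniformity whose fundamental system of entourages consists of the open congruences. Since $\End(Q)$ (respectively $\Aut(Q)$) is closed in $C(Q,Q)$ in the compact-open topology by Proposition \ref{prop:closed_subset}, Theorem \ref{Ascoli} says that $\End(Q)$ (respectively $\Aut(Q)$) is compact if and only if it is uniformly equicontinuous. So the whole statement reduces to proving that $\End(Q)$ is uniformly equicontinuous if and only if $Q$ has a fundamental system of open fully invariant congruences, and the analogous equivalence for $\Aut(Q)$ and characteristic congruences. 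Because every entourage contains an open congruence, uniform equicontinuity of $\End(Q)$ is equivalent to the assertion that for every open congruence $\alpha$ the set
\[
\beta_{\alpha} := \bigcap_{f \in \End(Q)} (f\times f)^{-1}(\alpha)
\]
is an entourage (and likewise for $\Aut(Q)$, with the intersection taken over $f \in \Aut(Q)$).

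The next step is to identify $\beta_{\alpha}$ algebraically. Each $(f\times f)^{-1}(\alpha)=\ker(q_{\alpha}\circ f)$ is an open congruence of index at most $|Q/\alpha|$, where $q_{\alpha}:Q\to Q/\alpha$ is the quotient map, so $\beta_{\alpha}$ is a congruence contained in $\alpha$ (take $f=\id_Q$). Using that $fg\in\End(Q)$ whenever $f,g\in\End(Q)$, one checks directly that $\beta_{\alpha}$ is fully invariant, and in fact it is the largest fully invariant congruence contained in $\alpha$: if $\gamma\subseteq\alpha$ is fully invariant and $(a,b)\in\gamma$, then $(f(a),f(b))\in\gamma\subseteq\alpha$ for all $f\in\End(Q)$, so $(a,b)\in\beta_{\alpha}$. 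Restricting this discussion to automorphisms shows that $\bigcap_{f\in\Aut(Q)}(f\times f)^{-1}(\alpha)$ is the largest characteristic congruence contained in $\alpha$.

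With these identifications the "if" direction is short: assume $Q$ has a fundamental system of open fully invariant congruences; given an open congruence $\alpha$, pick an open fully invariant $\rho\subseteq\alpha$, and note that full invariance gives $\rho\subseteq(f\times f)^{-1}(\rho)\subseteq(f\times f)^{-1}(\alpha)$ for every $f\in\End(Q)$, hence $\rho\subseteq\beta_{\alpha}$, so $\beta_{\alpha}$ is an entourage. Thus $\End(Q)$ is uniformly equicontinuous, hence compact; the argument for $\Aut(Q)$ using characteristic congruences is identical.

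For the "only if" direction, assume $\End(Q)$ is compact, hence uniformly equicontinuous, so $\beta_{\alpha}$ is an entourage for every open congruence $\alpha$; in particular it contains some open congruence $\gamma$. The step I expect to be the crux is showing $\beta_{\alpha}$ is itself open: since $\gamma\subseteq\beta_{\alpha}$ and both are equivalence relations, transitivity forces $\beta_{\alpha}$ to be a union of boxes $[a]_{\gamma}\times[b]_{\gamma}$, and there are only finitely many of these because $Q/\gamma$ is finite, each open because the classes of the open congruence $\gamma$ are open. So $\beta_{\alpha}$ is an open fully invariant congruence contained in $\alpha$, and since the open congruences already form a fundamental system of entourages, the family $\{\beta_{\alpha}\}$ is a fundamental system of open fully invariant congruences. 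Replacing $\End(Q)$ by $\Aut(Q)$ throughout yields the characteristic version. Apart from this openness observation, the only mildly delicate point is the bookkeeping behind the equivalence of uniform equicontinuity of $\End(Q)$ with the sets $\beta_{\alpha}$ being entourages, which rests on the open congruences being cofinal among entourages.
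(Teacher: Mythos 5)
Your proposal is correct and follows essentially the same route as the paper's proof: reduce compactness to uniform equicontinuity via Proposition \ref{prop:closed_subset} and Ascoli, and identify $\bigcap_{f}(f\times f)^{-1}(\alpha)$ as an open fully invariant (respectively, characteristic) congruence contained in $\alpha$. Your extra observation that a congruence containing an open congruence is a finite union of open boxes, hence open, just makes explicit a step the paper asserts without detail.
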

\begin{proof}
We present the proof for $\End(Q)$, and the proof for $\Aut(Q)$ can be derived analogously by substituting `fully invariant' with `charactersitic'.

Assuming that $\End(Q)$ is compact, consider $\alpha$ as an open congruence on $Q$. By Ascoli's theorem, \ref{Ascoli}, $\End(Q)$ is uniformly equicontinuous, and thus the set
$$
\rho= \underset{\phi \in \End(Q)}{\cap} (\phi \times \phi)^{-1}(\alpha)
$$
is an entourage of the uniformity on $Q$. Obviously, $\rho$ is a congruence and according to the definition of the uniform structure on $Q$, it contains an open congruence on $Q$, implying that $\rho$ is an open congruence. Now, let $f \in \End(Q)$, then
$$
(f \times f)^{-1}(\rho)=\underset{\phi \in \End(Q)}{\cap} (\phi \circ f \times \phi \circ f)^{-1}(\alpha) \supset \underset{\phi \in \End(Q)}{\cap} (\phi \times \phi)^{-1}(\alpha) = \rho.
$$
Therefore, $\rho$ is an open fully invariant congruence on $Q$. Since the identity map on $Q$ is in $\End(Q)$, we conclude that $\rho \subset \alpha$. Thus the uniform structure on $Q$ admits a fundamental system of open fully invariant congruences.

Conversely, consider the case where the uniform structure on $Q$ admits a fundamental system of open fully invariant congruences. Let $\alpha$ be an entourage. Then there exist an open fully invariant congruence $\rho$ such that $\rho \subset \alpha$. This in turn implies that
$$
\underset{\phi \in \End(Q)}{\cap} (\phi \times \phi)^{-1} (\alpha) \supset \underset{\phi \in \End(Q)}{\cap} (\phi \times \phi)^{-1} (\rho) \supset \rho,
$$
and so by definition of uniform structures $\underset{\phi \in \End(Q)}{\cap} (\phi \times \phi)^{-1} (\alpha)$ is an entourage. Thus $\End(Q)$ is uniformly equicontinuous. Now by Proposition \ref{prop:closed_subset} and Ascoli's theorem \ref{Ascoli}, $\End(Q)$ is compact.
\end{proof}

\begin{corollary}
Let $Q$ be a topologically finitely generated profinite quandle. Then $\End(Q)$ is a profinite monoid, and $\Aut(Q)$ is a profinite group in the compact open topology, which coincides with the pointwise convergence topology.
\end{corollary}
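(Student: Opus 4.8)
The plan is simply to chain together the three preceding results with a one-line observation. First I would invoke Corollary~\ref{cor:topologically_finitely_generated_profinite_quandle_has_fundamental_system}: since $Q$ is topologically finitely generated and profinite, it admits a fundamental system of open fully invariant congruences. The key elementary observation is that every fully invariant congruence is in particular characteristic, because $\Aut(Q) \subseteq \End(Q)$; hence the \emph{same} family of congruences is simultaneously a fundamental system of open fully invariant congruences and a fundamental system of open characteristic congruences. This lets both the $\End(Q)$ and the $\Aut(Q)$ assertions be treated by a single argument.

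Next I would apply Theorem~\ref{thm:compactness_of_end_and_aut_in_profinite_quandle}: for a profinite quandle, having a fundamental system of open fully invariant (respectively, characteristic) congruences is equivalent to the compactness of $\End(Q)$ (respectively, $\Aut(Q)$) in the compact-open topology. Combined with the previous paragraph, this yields that both $\End(Q)$ and $\Aut(Q)$ are compact in the compact-open topology.

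Finally, since $Q$ is profinite it is compact, Hausdorff, and totally disconnected by Corollary~\ref{cor:profinite_quandles_are_compact_and_Hausdorff}, so Theorem~\ref{thm:when_does_end_and_aut_are_profinite} applies verbatim: $\End(Q)$ is a profinite monoid, $\Aut(Q)$ is a profinite group, and the compact-open topology coincides with the topology of pointwise convergence. I do not anticipate any real obstacle here; the only point that requires a moment's care is the remark that a fully invariant congruence is automatically characteristic, so that the one fundamental system produced by Corollary~\ref{cor:topologically_finitely_generated_profinite_quandle_has_fundamental_system} certifies compactness of both $\End(Q)$ and $\Aut(Q)$ at once, after which everything is a direct citation of the earlier theorems.
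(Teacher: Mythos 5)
Your proof is correct and follows essentially the same chain of results as the paper: obtain a fundamental system of open fully invariant congruences from topological finite generation, deduce compactness of $\End(Q)$ and $\Aut(Q)$ via Theorem~\ref{thm:compactness_of_end_and_aut_in_profinite_quandle}, and conclude with Theorem~\ref{thm:when_does_end_and_aut_are_profinite}. If anything, your write-up is slightly more careful than the paper's, since you explicitly cite Corollary~\ref{cor:topologically_finitely_generated_profinite_quandle_has_fundamental_system} (where the finite-generation hypothesis actually enters) and record the observation that a fully invariant congruence is automatically characteristic, which is needed to cover the $\Aut(Q)$ case.
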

\begin{proof}
Every profinite quandle, being a subquandle of Cartesian product of finite quandles with the discrete topology, is totally disconnected. The result now follows from Lemma \ref{lemma:projective_limit_is_closed_in_Hausdorff} and Theorem \ref{thm:compactness_of_end_and_aut_in_profinite_quandle} and Theorem \ref{thm:when_does_end_and_aut_are_profinite}.
\end{proof}

Like automorphism groups of profinite groups \cite[Proposition 4.4.3]{MR2599132}, given a profinite quandle $Q$ with a fundamental system of open fully congruences, the monoid $\End(Q)$ can be explicitly represented as a projective limit of finite monoids.

\begin{theorem}\label{thm:end_aut_inn_as_projective_limits}
Let $Q$ be a profinite quandle. Suppose $Q$ has a fundamental system of entourages $\mathcal{O}$ consisting of open fully invariant congruences. If $\alpha \in \mathcal{O}$, then the natural projection map $\Omega_{\alpha}: \End(Q) \to \End(Q/\alpha)$ is continuous and
$$
\End(Q) \cong \underset{\alpha \in \mathcal{O}}{\varprojlim} \Omega_{\alpha}(\End(Q)).
$$
The analogous results holds for $\Aut(Q)$ and $\Inn(Q)$ if there exists a fundamental system of open characteristic congruences for $Q$. Furthermore,
$$
\Inn(Q) \cong \underset{\alpha \in \mathcal{O_C}}{\varprojlim}{\Inn(Q/\alpha)},
$$
where $\mathcal{O_C}$ is the collection of open characteristic congruences for $Q$.
\end{theorem}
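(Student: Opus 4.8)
The plan is to establish the isomorphism $\End(Q) \cong \varprojlim_{\alpha \in \mathcal{O}} \Omega_\alpha(\End(Q))$ by first showing each projection $\Omega_\alpha$ is a well-defined continuous monoid homomorphism, then assembling them into a compatible family, and finally verifying that the induced map into the projective limit is a topological isomorphism. Since $\mathcal{O}$ consists of \emph{fully invariant} open congruences, for each $\alpha \in \mathcal{O}$ and each $\phi \in \End(Q)$ the induced map $\phi_\alpha \colon Q/\alpha \to Q/\alpha$, $[p]_\alpha \mapsto [\phi(p)]_\alpha$, is well-defined (this is precisely the content of full invariance together with the earlier observation preceding the theorem), and $\phi \mapsto \phi_\alpha$ is visibly multiplicative and unital, so $\Omega_\alpha$ is a monoid homomorphism onto its image $\Omega_\alpha(\End(Q)) \subseteq \End(Q/\alpha)$, a finite monoid since $Q/\alpha$ is finite. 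Continuity of $\Omega_\alpha$ is where the compact-open (equivalently, by Theorem \ref{thm:when_does_end_and_aut_are_profinite}, pointwise-convergence) topology enters: a basic open set in $\End(Q/\alpha)$ is a pointwise constraint $\psi(\bar p) = \bar q$, whose preimage under $\Omega_\alpha$ is the constraint $\phi(p) \in [q]_\alpha$, and since $[q]_\alpha$ is open in $Q$ (classes of an open congruence are open), this preimage is a subbasic open set in the pointwise-convergence topology on $\End(Q)$.

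\medskip

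Next I would build the comparison map. Whenever $\alpha, \beta \in \mathcal{O}$ with $\beta \subseteq \alpha$, the quotient map $Q/\beta \twoheadrightarrow Q/\alpha$ induces a transition homomorphism $\Omega_\beta(\End(Q)) \to \Omega_\alpha(\End(Q))$ sending $\phi_\beta \mapsto \phi_\alpha$, and these are compatible, so $(\Omega_\alpha(\End(Q)))_{\alpha \in \mathcal{O}}$ forms a projective system of finite monoids over the directed set $(\mathcal{O}, \supseteq)$ — directedness holds because $\mathcal{O}$ is a fundamental system of entourages, so $\alpha, \beta \in \mathcal{O}$ gives some $\gamma \in \mathcal{O}$ with $\gamma \subseteq \alpha \cap \beta$. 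The family $(\Omega_\alpha)_{\alpha \in \mathcal{O}}$ is then compatible with this system, inducing a canonical monoid homomorphism $\Theta \colon \End(Q) \to \varprojlim_{\alpha} \Omega_\alpha(\End(Q))$, which is continuous by the universal property of the limit topology together with continuity of each $\Omega_\alpha$. I then verify $\Theta$ is injective and surjective. Injectivity: if $\phi \neq \psi$ in $\End(Q)$, there is $p \in Q$ with $\phi(p) \neq \psi(p)$, and since $\mathcal{O}$ generates the uniformity on the Hausdorff space $Q$, some $\alpha \in \mathcal{O}$ separates $\phi(p)$ and $\psi(p)$, so $\phi_\alpha \neq \psi_\alpha$. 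Surjectivity: given a compatible thread $(\phi^{(\alpha)})_\alpha$ with each $\phi^{(\alpha)} = \Omega_\alpha(\eta_\alpha)$ for some $\eta_\alpha \in \End(Q)$, compatibility forces these to glue to a single continuous endomorphism $\phi$ of $Q = \varprojlim Q/\alpha$ with $\Omega_\alpha(\phi) = \phi^{(\alpha)}$; concretely $\phi(q) = (\phi^{(\alpha)}([q]_\alpha))_\alpha$ is well-defined, a quandle homomorphism since each component is, and continuous because each coordinate is. Finally, $\Theta$ is a continuous bijection from a compact space (Theorem \ref{thm:compactness_of_end_and_aut_in_profinite_quandle} gives compactness of $\End(Q)$, since a fundamental system of open fully invariant congruences exists) to a Hausdorff space, hence a homeomorphism, so $\Theta$ is a topological monoid isomorphism.

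\medskip

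For $\Aut(Q)$ the argument is identical with ``fully invariant'' replaced by ``characteristic'': characteristic congruences are exactly those preserved by all automorphisms, so $\phi_\alpha$ lands in $\Aut(Q/\alpha)$ when $\phi \in \Aut(Q)$, and $\Aut(Q)$ is compact by the $\Aut$-version of Theorem \ref{thm:compactness_of_end_and_aut_in_profinite_quandle}. The statement for $\Inn(Q)$ needs a small additional observation. The inner automorphism group $\Inn(Q)$ is generated by the right translations $R_q$; for any characteristic $\alpha$, since $R_q$ descends to $R_{[q]_\alpha}$ on $Q/\alpha$, the map $\Omega_\alpha$ restricts to a surjection $\Inn(Q) \to \Inn(Q/\alpha)$ (surjectivity because the generators map onto generators). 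One then checks that $(\Inn(Q/\alpha))_{\alpha \in \mathcal{O_C}}$ is a projective system and that $\Inn(Q) \to \varprojlim \Inn(Q/\alpha)$ is an isomorphism by the same compact-bijective-to-Hausdorff argument, noting $\Inn(Q)$ is closed in $\Aut(Q)$ — here one uses that $\Inn(Q)$ is the image in $\Aut(Q)$ of the closed set of threads, or directly that it is carved out by the limit condition — hence compact. That $\Inn(Q)$ is thus profinite is then immediate.

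\medskip

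\emph{Expected main obstacle.} The genuinely delicate point is the closedness/compactness of $\Inn(Q)$ inside $\Aut(Q)$: the subgroup algebraically generated by the $R_q$ need not be closed in general, so identifying $\varprojlim \Inn(Q/\alpha)$ as precisely $\Inn(Q)$ (rather than its closure $\overline{\Inn(Q)}$) requires care — one must argue that every thread of inner automorphisms of the quotients actually arises from an inner automorphism of $Q$, not merely a limit of such. The cleanest route is to observe that a thread $(\iota^{(\alpha)})$ with $\iota^{(\alpha)} \in \Inn(Q/\alpha)$ glues to some $\iota \in \Aut(Q)$ by the first part, and then show $\iota \in \Inn(Q)$ by exhibiting it as a right translation or product of right translations using compactness of $Q$ and a diagonal/limit argument over $\mathcal{O_C}$; alternatively one relies on the profinite analogue in \cite{MR2599132} that the author is following. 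A secondary subtlety is verifying that $\Omega_\alpha$ maps \emph{onto} $\Inn(Q/\alpha)$ and that the transition maps preserve the inner subgroups, which is routine but must be stated.
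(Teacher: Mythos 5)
Your treatment of $\End(Q)$ and $\Aut(Q)$ is correct and follows essentially the same route as the paper: continuity of each $\Omega_{\alpha}$ from the clopenness of the finitely many $\alpha$-classes, injectivity of the induced map because $\mathcal{O}$ separates points, and the continuous-bijection-from-compact-to-Hausdorff argument at the end. The one substantive difference is the surjectivity step. The paper gets it by citing \cite[Corollary 1.1.6]{MR2599132} (a compatible family of surjections from a compact space onto the terms of an inverse system induces a surjection onto the limit), using compactness of $\End(Q)$ from Theorem \ref{thm:compactness_of_end_and_aut_in_profinite_quandle}; you instead glue a compatible thread directly to an endomorphism of $Q$. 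Your gluing is self-contained and arguably cleaner, but it silently uses the identification $Q\cong\underset{\alpha\in\mathcal{O}}{\varprojlim}\,Q/\alpha$ via $q\mapsto([q]_{\alpha})_{\alpha}$; this is true because $\mathcal{O}$ is a fundamental system of entourages of the (Hausdorff) uniformity on the compact quandle $Q$, but it should be stated and justified rather than assumed.

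Your \emph{expected main obstacle} concerning $\Inn(Q)$ is genuine, and neither your sketch nor the paper's own proof disposes of it. The paper's entire argument for $\Inn(Q)$ is the observation that an onto homomorphism $Q\twoheadrightarrow Q/\alpha$ induces an onto homomorphism $\Inn(Q)\twoheadrightarrow\Inn(Q/\alpha)$. That gives surjectivity of each restricted $\Omega_{\alpha}$ and of the transition maps of the system $(\Inn(Q/\alpha))_{\alpha\in\mathcal{O_C}}$, hence injectivity plus dense image of $\Inn(Q)\to\underset{\alpha\in\mathcal{O_C}}{\varprojlim}\,\Inn(Q/\alpha)$; but surjectivity onto the limit is exactly where the compactness argument used for $\End(Q)$ and $\Aut(Q)$ breaks down, since $\Inn(Q)$ is the abstract subgroup of $\Aut(Q)$ generated by the right translations and is not obviously closed. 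A thread of inner automorphisms of the quotients glues (by the first part) to some element of $\Aut(Q)$, but there is no argument that this element is a finite product of right translations; without one, the limit computes $\overline{\Inn(Q)}$ rather than $\Inn(Q)$. So your instinct — either prove closedness of $\Inn(Q)$ or replace it by its closure — identifies precisely the missing ingredient, and as written this part of the claim remains unproved in both your proposal and the paper.
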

\begin{proof}
Let $\alpha$ be an open fully invariant congruence on $Q$ and $\phi \in \End(Q)$. Then,
\begin{align*}
\Omega_{\alpha}^{-1}(\Omega_{\alpha}(\phi))&=\{ \psi: Q \to Q~:~(\psi(q), \phi(q)) \in \alpha \textrm{ for all } q \in Q \}\\
&=\underset{q \in Q}{\cap} V( [q]_{\alpha}, [\phi(q)]_{\alpha}),
\end{align*}
where $V(A,B)$ denotes the set of all continuous functions $f: Q \to Q$ such that $f(A) \subset B$. For any $\alpha \in \mathcal{O}$, the  classes under $\alpha$ are both closed and open. Consequently, the sets $V([q]_{\alpha}, [\phi(q)]_{\alpha}) $ are open in the compact-open topology for each $q \in Q$. Moreover, since $Q$ is compact, there exist only finitely many classes under $\alpha$, and hence, the set $\Omega_{\alpha}^{-1}(\Omega_{\alpha}(\phi))$ is open. Thus $\Omega_{\alpha}$ is continuous.

If $\alpha, \gamma$ are open fully invariant congruences and $\gamma \subset \alpha$, then there is a canonical homomorphism $\Omega_{\gamma \alpha}: \Omega_{\gamma}(\End(Q)) \to \Omega_{\alpha}(\End(Q))$ defined as $\Omega_{\alpha \gamma}(\Omega_{\gamma}(\phi))= \Omega_{\alpha}(\phi)$. Consequently, we have the following commutative diagram:

\begin{center}
\begin{tikzcd}
                                                           & \Omega_{\gamma}(\End(Q)) \arrow[dd, "\Omega_{\alpha \gamma}"] \\
\End(Q) \arrow[ru, "\Omega_{\gamma}"] \arrow[rd, "\Omega_{\alpha}"'] &                                                    \\
                                                           & \Omega_{\alpha}(\End(Q))                                
\end{tikzcd}
\end{center}

Observe that the family $(\Omega_{\alpha}(\End(Q)), \Omega_{ \alpha \gamma}, \mathcal{O})$ forms a projective system of finite monoids, with $\mathcal{O}$ as the directed set under the reverse inclusion. According to \cite[Corollary 1.1.6]{MR2599132}, the family of continuous homomorphism $\{\Omega_{\alpha}\}_{\alpha \in \mathcal{O}}$ induces a continuous epimorphism
$$
\Omega: \End(Q) \to \underset{\alpha \in \mathcal{O}}{\varprojlim}~{\Omega_{\alpha}(\End(Q/\alpha)}.
$$
We now claim that $\Omega$ is injective. Suppose $\phi \neq \psi \in \End(Q)$, and let $q \in Q$ such that $\phi(q) \neq \psi(q)$. Then there exists an open fully invariant congruence $\alpha$ for which $(\phi(q), \psi(q)) \notin \alpha$. As a result, we deduce that $\Omega_{\alpha}(\phi) \neq \Omega_{\alpha}(\psi)$. Therefore, $\Omega$ is injective, and since $\Omega$ is a mapping between profinite spaces, it is a homeomorphism.

In case of $\Inn(Q)$ it is sufficient to note that if $f:Q \to P$ is an onto homomorphism, then it will induce onto group homomorphism $\Inn(Q) \twoheadrightarrow \Inn(P)$.
\end{proof}

\begin{ack}
The author is supported by the Fulbright-Nehru postdoctoral fellowship. The author would also like to thank the Department of Mathematics and Statistics at the University of South Florida for providing office space and resources.
\end{ack}

\begin{filecontents*}[overwrite]{references1.bib}

@book {MR4176669,
    AUTHOR = {Almeida, Jorge and Costa, Alfredo and Kyriakoglou, Revekka and
              Perrin, Dominique},
     TITLE = {Profinite semigroups and symbolic dynamics},
    SERIES = {Lecture Notes in Mathematics},
    VOLUME = {2274},
 PUBLISHER = {Springer, Cham},
      YEAR = {[2020] \copyright 2020},
     PAGES = {ix+276},
      ISBN = {978-3-030-55215-2; 978-3-030-55214-5},
   MRCLASS = {20E18 (20M05 37B10)},
  MRNUMBER = {4176669},
MRREVIEWER = {Andriy\ S.\ Ol\={\i}\u{\i}nik},
       DOI = {10.1007/978-3-030-55215-2},
       URL = {https://doi.org/10.1007/978-3-030-55215-2},
}

@book {MR1726779,
    AUTHOR = {Bourbaki, Nicolas},
     TITLE = {General topology. {C}hapters 1--4},
    SERIES = {Elements of Mathematics (Berlin)},
      NOTE = {Translated from the French,
              Reprint of the 1989 English translation},
 PUBLISHER = {Springer-Verlag, Berlin},
      YEAR = {1998},
     PAGES = {vii+437},
      ISBN = {3-540-64241-2},
   MRCLASS = {54-02 (00A05 54-01)},
  MRNUMBER = {1726779},
}

@article {Takasaki,
	AUTHOR  = {Takasaki, Mituhisa},
	TITLE   = {Abstractions of symmetric functions},
	JOURNAL = {Tohoku Mathematical Journal},
	VOLUME  = {49},
	YEAR    = {1943},
	PAGES   = {143 -- 207}

@article {MR2175299,
    AUTHOR = {Ho, Benita and Nelson, Sam},
     TITLE = {Matrices and finite quandles},
   JOURNAL = {Homology Homotopy Appl.},
  FJOURNAL = {Homology, Homotopy and Applications},
    VOLUME = {7},
      YEAR = {2005},
    NUMBER = {1},
     PAGES = {197--208},
      ISSN = {1532-0081,1532-0073},
   MRCLASS = {20N99 (57M27)},
  MRNUMBER = {2175299},
MRREVIEWER = {S.\ Comer},
       URL = {http://projecteuclid.org/euclid.hha/1139839513},
}

@article {MR3981139,
    AUTHOR = {Bardakov, Valeriy G. and Singh, Mahender and Singh, Manpreet},
     TITLE = {Free quandles and knot quandles are residually finite},
   JOURNAL = {Proc. Amer. Math. Soc.},
  FJOURNAL = {Proceedings of the American Mathematical Society},
    VOLUME = {147},
      YEAR = {2019},
    NUMBER = {8},
     PAGES = {3621--3633},
      ISSN = {0002-9939,1088-6826},
   MRCLASS = {57M27 (20E26 20N05 57M05)},
  MRNUMBER = {3981139},
MRREVIEWER = {Pedro\ Lopes},
       DOI = {10.1090/proc/14488},
       URL = {https://doi.org/10.1090/proc/14488},
}

@article {MR4075375,
    AUTHOR = {Bardakov, Valeriy G. and Singh, Mahender and Singh, Manpreet},
     TITLE = {Link quandles are residually finite},
   JOURNAL = {Monatsh. Math.},
  FJOURNAL = {Monatshefte f\"{u}r Mathematik},
    VOLUME = {191},
      YEAR = {2020},
    NUMBER = {4},
     PAGES = {679--690},
      ISSN = {0026-9255,1436-5081},
   MRCLASS = {57K12 (20E26 20N02 20N05 57M05)},
  MRNUMBER = {4075375},
MRREVIEWER = {David\ Stanovsk\'{y}},
       DOI = {10.1007/s00605-019-01336-z},
       URL = {https://doi.org/10.1007/s00605-019-01336-z},
}

@incollection {MR0895623,
    AUTHOR = {Hempel, John},
     TITLE = {Residual finiteness for {$3$}-manifolds},
 BOOKTITLE = {Combinatorial group theory and topology ({A}lta, {U}tah,
              1984)},
    SERIES = {Ann. of Math. Stud.},
    VOLUME = {111},
     PAGES = {379--396},
 PUBLISHER = {Princeton Univ. Press, Princeton, NJ},
      YEAR = {1987},
      ISBN = {0-691-08409-2; 0-691-08410-6},
   MRCLASS = {57M05 (20E26 20F34 57N10)},
  MRNUMBER = {895623},
}

@book {MR1691054,
    AUTHOR = {Wilson, John S.},
     TITLE = {Profinite groups},
    SERIES = {London Mathematical Society Monographs. New Series},
    VOLUME = {19},
 PUBLISHER = {The Clarendon Press, Oxford University Press, New York},
      YEAR = {1998},
     PAGES = {xii+284},
      ISBN = {0-19-850082-3},
   MRCLASS = {20E18},
  MRNUMBER = {1691054},
MRREVIEWER = {Alexander Lubotzky},
}

@book {MR2599132,
    AUTHOR = {Ribes, Luis and Zalesskii, Pavel},
     TITLE = {Profinite groups},
    SERIES = {Ergebnisse der Mathematik und ihrer Grenzgebiete. 3. Folge. A
              Series of Modern Surveys in Mathematics [Results in
              Mathematics and Related Areas. 3rd Series. A Series of Modern
              Surveys in Mathematics]},
    VOLUME = {40},
   EDITION = {Second},
 PUBLISHER = {Springer-Verlag, Berlin},
      YEAR = {2010},
     PAGES = {xvi+464},
      ISBN = {978-3-642-01641-7},
   MRCLASS = {20E18},
  MRNUMBER = {2599132},
       DOI = {10.1007/978-3-642-01642-4},
       URL = {https://doi.org/10.1007/978-3-642-01642-4},
}

@article {MR2849853,
    AUTHOR = {Steinberg, Benjamin},
     TITLE = {On the endomorphism monoid of a profinite semigroup},
   JOURNAL = {Port. Math.},
  FJOURNAL = {Portugaliae Mathematica. A Journal of the Portuguese
              Mathematical Society},
    VOLUME = {68},
      YEAR = {2011},
    NUMBER = {2},
     PAGES = {177--183},
      ISSN = {0032-5155,1662-2758},
   MRCLASS = {22A15 (20M20)},
  MRNUMBER = {2849853},
MRREVIEWER = {Ali\ Madanshekaf},
       DOI = {10.4171/PM/1886},
       URL = {https://doi.org/10.4171/PM/1886},
}

@article {MR0638121,
    AUTHOR = {Joyce, David},
     TITLE = {A classifying invariant of knots, the knot quandle},
   JOURNAL = {J. Pure Appl. Algebra},
  FJOURNAL = {Journal of Pure and Applied Algebra},
    VOLUME = {23},
      YEAR = {1982},
    NUMBER = {1},
     PAGES = {37--65},
      ISSN = {0022-4049,1873-1376},
   MRCLASS = {57M25 (20F29 20N05 53C35)},
  MRNUMBER = {638121},
MRREVIEWER = {Mark\ E.\ Kidwell},
       DOI = {10.1016/0022-4049(82)90077-9},
       URL = {https://doi.org/10.1016/0022-4049(82)90077-9},
}

@article {MR0672410,
    AUTHOR = {Matveev, S. V.},
     TITLE = {Distributive groupoids in knot theory},
   JOURNAL = {Mat. Sb. (N.S.)},
  FJOURNAL = {Matematicheski\u{\i} Sbornik. Novaya Seriya},
    VOLUME = {119(161)},
      YEAR = {1982},
    NUMBER = {1},
     PAGES = {78--88, 160},
      ISSN = {0368-8666},
   MRCLASS = {57M25 (20L15)},
  MRNUMBER = {672410},
MRREVIEWER = {Jonathan\ A.\ Hillman},
}

@article {MR3848406,
    AUTHOR = {Ueki, Jun},
     TITLE = {The profinite completions of knot groups determine the
              {A}lexander polynomials},
   JOURNAL = {Algebr. Geom. Topol.},
  FJOURNAL = {Algebraic \& Geometric Topology},
    VOLUME = {18},
      YEAR = {2018},
    NUMBER = {5},
     PAGES = {3013--3030},
      ISSN = {1472-2747,1472-2739},
   MRCLASS = {57M27 (20E18 20E26 57M12)},
  MRNUMBER = {3848406},
MRREVIEWER = {Tetsuya\ Ito},
       DOI = {10.2140/agt.2018.18.3013},
       URL = {https://doi.org/10.2140/agt.2018.18.3013},
}

@incollection {MR4205634,
    AUTHOR = {Boileau, Michel and Friedl, Stefan},
     TITLE = {The profinite completion of 3-manifold groups, fiberedness and
              the {T}hurston norm},
 BOOKTITLE = {What's next?---the mathematical legacy of {W}illiam {P}.
              {T}hurston},
    SERIES = {Ann. of Math. Stud.},
    VOLUME = {205},
     PAGES = {21--44},
 PUBLISHER = {Princeton Univ. Press, Princeton, NJ},
      YEAR = {2020},
      ISBN = {978-0-691-18589-7; 978-0-691-16776-3; 978-0-691-16777-0},
   MRCLASS = {57K30 (20E18 57K10)},
  MRNUMBER = {4205634},
       DOI = {10.2307/j.ctvthhdvv.5},
       URL = {https://doi.org/10.2307/j.ctvthhdvv.5},
}

@article {MR4612506,
    AUTHOR = {Inoue, Ayumu},
     TITLE = {The knot quandle of the twist-spun trefoil is a central
              extension of a {S}chl\"{a}fli quandle},
   JOURNAL = {Osaka J. Math.},
  FJOURNAL = {Osaka Journal of Mathematics},
    VOLUME = {60},
      YEAR = {2023},
    NUMBER = {3},
     PAGES = {597--611},
      ISSN = {0030-6126},
   MRCLASS = {57K12 (52C22)},
  MRNUMBER = {4612506},
}
\end{filecontents*}

\bibliography{references1}{}
\bibliographystyle{abbrv}
\end{document}